\newtheorem{Theorem}{Theorem}[section]
\newtheorem{Proposition}[Theorem]{Proposition}
\newtheorem{Lemma}[Theorem]{Lemma}
\newtheorem{Corollary}[Theorem]{Corollary}
\theoremstyle{definition}
\newtheorem{Definition}{Definition}[section]
\theoremstyle{remark}
\numberwithin{equation}{section}
\newcommand{\R}{{\mathbb R}}
\newcommand{\C}{{\mathbb C}}
\newcommand{\SL}{{\textrm{\rm SL}}}
\newcommand{\PSL}{{\textrm{\rm PSL}}}
\newcommand{\tr}{{\textrm{\rm tr}\:}}
\renewcommand{\Im}{{\textrm{\rm Im}\:}}
\renewcommand{\Re}{{\textrm{\rm Re}\:}}
\begin{document}

\title[Reflectionless operators]{Reflectionless operators and automorphic Herglotz functions}

\author{Christian Remling}

\address{Department of Mathematics\\
University of Oklahoma\\
Norman, OK 73019}
\email{christian.remling@ou.edu}
\urladdr{www.math.ou.edu/$\sim$cremling}

\date{December 12, 2024}

\thanks{2020 {\it Mathematics Subject Classification.} Primary 34L40 81Q10 Seconday 30F35}

\keywords{Dirac operator, canonical system, reflectionless operator, Fuchsian group, automorphic function}

\begin{abstract}
I am interested in canonical systems and Dirac operators that are reflectionless on an open set. In this situation,
the half line $m$ functions are holomorphic continuations of each other and may be combined into a single function.
By passing to the universal cover of its domain, we then obtain a one-to-one correspondence of these operators with Herglotz functions
that are automorphic with respect to the Fuchsian group of covering transformations. I investigate the properties of this formalism, with
particular emphasis given to the measures that are automorphic in a corresponding sense.
This will shed light on the reflectionless operators as a topological space, on their extreme points, and on how the heavily studied smaller space of
finite gap operators sits inside the (much) larger space.
\end{abstract}
\maketitle
\section{Introduction}
This paper continues the theme of \cite{HMcBR,RemZ} along what I hope is a natural line of inquiry.
We consider canonical systems
\begin{equation}
\label{can}
J y'(x) = -zH(x)y(x) , \quad J = \begin{pmatrix} 0 & -1\\ 1&0 \end{pmatrix} ,
\end{equation}
with coefficient functions $H(x)\in\R^{2\times 2}$, $H(x)\ge 0$, $H\in L^1_{\textrm{loc}}(\R)$,
and Dirac equations
\begin{equation}
\label{Dirac}
Jy'(x) + W(x)y(x) = -zy(x) ,
\end{equation}
with $W(x)=W^t(x)\in\R^{2\times 2}$, $W\in L^1_{\textrm{loc}}(\R)$.
In both cases, these equations generate self-adjoint relations and operators on the associated Hilbert spaces
$L^2_H(\R)$ and $L^2(\R;\C^2)$, respectively.

The \textit{Titchmarsh-Weyl }$m$ \textit{functions }may be defined as
\begin{equation}
\label{defm}
m_{\pm}(z) = \pm y_{\pm}(0,z) ,
\end{equation}
and here $z\in\C^+=\{z\in\C : \Im z>0\}$ and $y_{\pm}(x,z)$ denotes the unique, up to a constant factor, solution $y$ of \eqref{can} or \eqref{Dirac} that
is square integrable on $\pm x>0$. On the right-hand side of \eqref{defm}, we also use the convenient convention of identifying
a vector $y=(y_1,y_2)^t\in\mathbb C^2$, $y\not= 0$, with the point $y_1/y_2\in\mathbb C_{\infty}$
on the Riemann sphere. So $m_{\pm}$ take values in $\C_{\infty}$, and in fact these functions are generalized \textit{Herglotz functions, }that is,
they map the upper half plane $\mathbb C^+$ holomorphically back to $\overline{\C^+}=\C^+\cup\R_{\infty}$.

Any generalized Herglotz function is the $m$ function of a canonical system on a half line, and if $H(x)$ is suitably normalized,
which is usually done by imposing the condition $\tr H(x)=1$, then we obtain a bijection between Herglotz functions and coefficient functions $H(x)$
\cite[Theorem 5.1]{Rembook}. If $H(x)\equiv P_{\alpha}$, the projection onto $e_{\alpha}=(\cos\alpha,\sin\alpha)$, on $x\ge 0$,
then $m_+(z)\equiv -\tan\alpha\in\R_{\infty}$,
and if $H$ is not of this special (trivial) type, then $m_+$ is a genuine Herglotz function, so maps $\C^+$ back to itself.

We can then also think of Dirac equations as special canonical systems since we can in particular realize the $m$ functions of a given Dirac operator
by a suitable canonical system. Of course, one can also rewrite \eqref{Dirac} directly. These issues are discussed in more detail in \cite[Section 2]{RemZ}.
When I talk about canonical systems in the sequel, it should be understood in this sense, that is, as containing Dirac equations as a special case.
In fact, we will mostly focus on Dirac operators in this paper, but occasionally the more general framework of canonical systems is useful.

We call a canonical system or a Dirac equation \textit{reflectionless }on a Borel set $A\subseteq\R$ if
\begin{equation}
\label{refless}
m_+(x) = -\overline{m_-(x)}
\end{equation}
for (Lebesgue) almost every $x\in A$. Reflectionless operators are important because they can be thought of as the basic building blocks of
arbitrary operators with some absolutely continuous spectrum \cite{RemAnn}, \cite[Chapter 7]{Rembook}.

Here we are interested in operators that are reflectionless on a \textit{finite gap set}
\[
U = \R_{\infty} \setminus \bigcup_{n=1}^N [a_n, b_n] , \quad a_1<b_1< \ldots < b_N .
\]
Right now, it may seem pointless and even strange to have put $\infty$ into this set but it will become clear in a moment why we did this,
when we introduce the function $M$ below.

So $U$ is an open subset of $\R_{\infty}$ whose complement has $N$
components, and all of these have positive length.
Much of what we do below applies more generally to any such set $U\subseteq\R_{\infty}$, whether or not $\infty\in U$, but I will not spell this out.

It will be convenient to have short-hand notations available for certain spaces of reflectionless operators. We set
\[
\mathcal R (U) = \{ H(x): H \textrm{ reflectionless on }U\} ,
\]
and we also write $\mathcal D$ for the collection of canonical systems that are Dirac operators, in the sense that their $m$ functions
$m_{\pm}$ are the $m$ functions of some Dirac equation \eqref{Dirac}. Furthermore, we let
\[
\mathcal D(U) = \mathcal D \cap \mathcal R(U) .
\]
Finally, we introduce
\[
\mathcal R_0(U) = \{ H\in\mathcal R(U): \sigma (H)\subseteq\overline{U} \}, \quad \mathcal D_0(U)=\mathcal D\cap\mathcal R_0(U) .
\]
Note that the trivial canonical systems $H\equiv P_{\alpha}$ are in $\mathcal R_0(U)$ (but not in $\mathcal D_0(U)$) according to this definition since they satisfy
$\sigma(H)=\emptyset$, $m_{\pm}(z)\equiv \mp\tan\alpha$.

The operators from $\mathcal D_0(U)$ are called finite gap operators, and they and especially their analogs for Schr{\"o}dinger operators
and Jacobi matrices have been studied extensively. See, for example, \cite{ConJo,GesHol,Mum,Teschl}

If a canonical system $H(x)$ is reflectionless on $U$, then
\[
M(z) = \begin{cases} m_+(z) & z\in\C^+ \\ -\overline{m_-(\overline{z})} & z\in\C^-
\end{cases}
\]
has a holomorphic continuation to $\Omega\equiv \C^+\cup U\cup\C^-$. Compare \cite[Lemmas 1.1, 1.2]{RemZ}.

Clearly, $M:\Omega\to\overline{\C^+}$ still takes values in the (closed) upper half plane.
If $N=1$, then $\Omega$ is conformally equivalent to $\C^+$, and a corresponding (explicit)
change of variable realizes $M$ as a Herglotz function, which we called the $F$ function of $H(x)$ in \cite{RemZ}.

If $N>1$, then $\Omega$ is no longer simply connected and thus not conformally equivalent to $\C^+$.
However, the universal cover can serve as a substitute, so we can make the following basic definition.
\begin{Definition}
\label{D1.1}
Let $\varphi: \C^+\to\Omega$ be the (unique) universal covering map with $\varphi(i)=\infty$, $\lim_{z\to i} -i(z-i)\varphi(z)>0$. Let $H\in\mathcal R(U)$.
Then we define the \textit{$F$ function }of $H$ as $F(\lambda)=M(\varphi(\lambda))$.
\end{Definition}
This choice of covering map
is motivated by \cite{CSZ1,SimRice}, which use an analogous map for different purposes. I take the covering space to be the upper half plane,
not the unit disk, as in \cite{CSZ1,SimRice}, which I find more convenient for my purposes, though that may be a matter of taste.
In any event, as we'll see, this map $\varphi$ will interact
nicely with the reflection symmetry of $\Omega$ about the real axis. Note also that we make $\varphi$ unique in the usual way by prescribing the value and the
argument of the derivative at a point, except that since this value is $\infty$, the second part about the derivative is slightly more awkward to write down than usual.

Our discussion so far has shown that $F$ is a Herglotz function, and it is clearly \textit{automorphic }with respect to the action of the group $G\le\operatorname{PSL}(2,\R)$
of covering transformations, that is of automorphisms $g$ of $\C^+$ satisfying $\varphi g=\varphi$: we have $Fg=F$ for all $g\in G$.
Here and in the sequel, we employ the convenient notational convention of writing composition of maps as juxtaposition, so, for example, $\varphi g=\varphi\circ g$.

We will denote the set of $G$ automorphic Herglotz functions by $\mathcal H_G$. It is a compact subset of $\mathcal H$, the space of all generalized
Herglotz functions. Here, we use the topology of locally uniform convergence on $\mathcal H$. In fact, this space is metrizable, and a possible choice of metric is
\begin{equation}
\label{defd}
d(F_1, F_2) = \max_{|z-i|\le 1/2} \delta (F_1(z), F_2(z)) ,
\end{equation}
with $\delta$ denoting the spherical metric, and we think of $\overline{\C^+}\subseteq\C_{\infty}$ as a subset of the Riemann sphere $S^2\cong\C_{\infty}$.

The space of all canonical systems also comes with a natural metric, which is discussed in detail in \cite[Section 5.2]{Rembook}.
We don't need an explicit description here. What matters for our purposes is the fact that if this metric is used, then the bijection
$H(x)\leftrightarrow (m_+(z),m_-(z))$ between canonical systems and pairs of Herglotz functions becomes a homeomorphism also \cite[Corollary 5.8]{Rembook}.

The following facts from \cite{RemZ} generalize to the automorphic setting without any difficulties.
\begin{Theorem}
\label{T1.1}
(a) The map $\mathcal R(U)\to\mathcal H_G$, $H\mapsto F(\lambda; H)$ that sends $H$ to its $F$ function
is a homeomorphism onto the set of automorphic Herglotz functions.

(b) Let $H\in\mathcal R(U)$. Then $H\in\mathcal D(U)$ if and only if $F(i;H)=i$.
\end{Theorem}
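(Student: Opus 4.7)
For part (a), the plan is to mimic the argument of \cite{RemZ} for $N=1$, with the universal covering map $\varphi$ playing the role that an explicit conformal equivalence $\Omega\cong\C^+$ played there. The map is well defined and continuous: given $H\in\mathcal R(U)$, the function $M:\Omega\to\overline{\C^+}$ is holomorphic by the lemmas from \cite{RemZ} cited just before the definition, so $F=M\varphi$ is a generalized Herglotz function, and $Fg=M\varphi g=M\varphi=F$ for $g\in G$. Injectivity follows because $\varphi$ is surjective onto $\Omega$: if $F_1=F_2$, then $M_1=M_2$ on all of $\Omega$, so $m_\pm^{(1)}=m_\pm^{(2)}$ on $\C^+$, and the known homeomorphism between normalized canonical systems and pairs of half-line $m$ functions gives $H_1=H_2$.

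For surjectivity, take $F\in\mathcal H_G$. Automorphy combined with the fact that $\varphi$ is a covering with deck group $G$ lets $F$ descend to a well-defined holomorphic $M:\Omega\to\overline{\C^+}$ (locally, choose any branch of $\varphi^{-1}$; the $G$-invariance makes the definition independent of the branch). Set $m_+(z)=M(z)$ and $m_-(z)=-\overline{M(\overline z)}$ on $\C^+$; both are generalized Herglotz, so they correspond to some canonical system $H$. To see $H\in\mathcal R(U)$, note that $M$ is holomorphic across each interval of $U$, so the boundary values from $\C^\pm$ agree there, which is exactly $m_+(x)=-\overline{m_-(x)}$ on $U$. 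The homeomorphism property can then be obtained either by the normal-families argument of \cite{RemZ} (the $M_n$ are automatically locally bounded on $\Omega$ in the spherical metric) or, more cleanly, by observing that $\mathcal R(U)$ is closed, hence compact, in the compact space of normalized canonical systems, and $\mathcal H_G$ is Hausdorff, so the continuous bijection in the forward direction is automatically a homeomorphism.

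For part (b), I would appeal to the known characterization of Dirac operators among canonical systems by the high-energy asymptotics $m_+(iy)\to i$ (and $m_-(iy)\to i$ in the convention from \cite[Section 2]{RemZ}). If $H\in\mathcal D(U)$, then by this characterization $M(iy)=m_+(iy)\to i$, and also $M(-iy)=-\overline{m_-(iy)}\to -\overline{i}=i$, as $y\to\infty$; since $\infty\in U\subseteq\Omega$ is a point of holomorphy of $M$, these two limits agree with the single value $M(\infty)=i$. Because $\varphi(i)=\infty$, this gives $F(i)=M(\infty)=i$. The converse direction reverses the same chain of implications: $F(i)=i$ forces $M(\infty)=i$, which yields the required asymptotics of $m_\pm$ at $i\infty$, and hence $H\in\mathcal D$.

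The main obstacle is the descent step in part (a), surjectivity: one must verify that an arbitrary $G$-automorphic $F$ on $\C^+$ really does factor through $\varphi$ as a single-valued holomorphic $M$ on $\Omega$. This is a standard covering-space fact given that $\varphi$ realizes $\Omega$ as $\C^+/G$, but it is the only place where the topology of $\Omega$ (as opposed to its being some open subset of $\C_\infty$) enters in an essential way; everything else is a soft translation from the case $N=1$ treated in \cite{RemZ} or a compactness argument.
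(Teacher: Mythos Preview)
Your argument for part (a) is essentially the paper's own: the same injectivity via recovery of $m_\pm$, the same descent through local inverses of $\varphi$ (well defined by automorphy and transitivity of $G$ on fibers) for surjectivity, the same definition of $m_\pm$ from $M$, and the paper likewise mentions the compactness shortcut for bicontinuity. For part (b) the paper also reduces everything to $m_\pm(\infty)=i$ and defers the inverse spectral theory to \cite[Theorem~3.2]{RemZ}, but it is explicit that the asymptotic condition $m_\pm(iy)\to i$ is only \emph{necessary} in general and that sufficiency uses the special structure of $\mathcal R(U)$ (holomorphy of $M$ at $\infty$); your phrasing of this as a ``known characterization'' glosses over that subtlety in the converse direction.
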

To make further progress, we'll have to study automorphic Herglotz functions and the associated measures in more detail, and this will
be our topic for much of this paper, in Sections 4--7. I hope that this analysis will be of some intrinsic interest. Returning to the spectral theory, we will then
be in a position to prove the following.
\begin{Theorem}
\label{T1.2}
$\mathcal D(U)$ is a compact convex set, and $H\in\mathcal D(U)$ is an extreme point if and only if $H\in\mathcal D_0(U)$.
\end{Theorem}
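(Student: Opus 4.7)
The plan is to pull everything back via the homeomorphism from Theorem \ref{T1.1} to the space of automorphic Herglotz functions. Set $\mathcal{H}_G^i := \{F \in \mathcal{H}_G : F(i) = i\}$, so that part (b) of Theorem \ref{T1.1} identifies $\mathcal{D}(U)$ with $\mathcal{H}_G^i$. I would define the convex structure on $\mathcal{D}(U)$ as the pullback of the natural convex structure on the Herglotz cone; this is the only sensible choice, since the coefficient functions themselves do not combine linearly in any reflectionlessness-preserving way.

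Compactness and convexity are then immediate. Evaluation $F \mapsto F(i)$ is continuous in the topology of locally uniform convergence, so $\mathcal{H}_G^i$ is closed in the compact space $\mathcal{H}_G$, hence compact. If $F_1, F_2 \in \mathcal{H}_G^i$ and $t \in [0,1]$, then $tF_1 + (1-t)F_2$ is Herglotz (the Herglotz class is convex), automorphic since each $F_j g = F_j$, and takes value $i$ at $\lambda = i$; thus $\mathcal{H}_G^i$ is convex. Both properties transfer to $\mathcal{D}(U)$ through the homeomorphism.

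The substance of the theorem is the extreme point characterization, and here I would invoke the description of elements of $\mathcal{H}_G$ via their representing (automorphic) measures, developed in Sections 4--7. A convex decomposition $F = tF_1 + (1-t)F_2$ in $\mathcal{H}_G^i$ corresponds to a convex decomposition of the representing measure together with the linear part, in a way that preserves both automorphy and the normalization at $i$. If $H \in \mathcal{D}(U) \setminus \mathcal{D}_0(U)$, then $\sigma(H)$ has mass in the interior of some gap $(a_n, b_n)$, and this mass lifts through $\varphi$ to an automorphic component of the representing measure that can be split into two halves; after compensating via the linear and constant parts to maintain $F(i) = i$, this yields a nontrivial decomposition of $F$, so $H$ is not extreme. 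Conversely, when $H \in \mathcal{D}_0(U)$, the finite gap condition forces $M$ and hence $F$ to be determined by finitely many Dirichlet parameters together with the normalization $F(i) = i$; a decomposition $F = tF_1 + (1-t)F_2$ in $\mathcal{H}_G^i$ would force $F_1, F_2$ to share the same rigid finite data, and uniqueness then gives $F_1 = F_2 = F$.

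The main obstacle I anticipate is the first direction: one must verify that the split halves of the measure, after appropriate adjustment, genuinely yield elements of $\mathcal{H}_G^i$, i.e., that the residual degrees of freedom in the Herglotz representation are enough to maintain $F_j(i) = i$ while respecting the automorphy constraint. Both this splitting and the rigidity step in the finite gap case rely essentially on the measure-theoretic structure developed in Sections 4--7; once those tools are in hand, the remaining arguments should be largely mechanical.
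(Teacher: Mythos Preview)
Your overall strategy matches the paper's: transfer to $\mathcal{H}_G^i$ via Theorem~\ref{T1.1}, read off compactness and convexity directly, then analyze extreme points through the representing measures. But the mechanism you propose for the splitting direction does not work. There are no ``linear and constant parts'' to compensate with: in \eqref{hergl} the condition $F(i)=i$ already forces $a=0$ and $\nu(\R_\infty)=1$, leaving no free scalar parameters beyond $\nu$ itself. More importantly, the constraint that is actually at risk after splitting is not $F(i)=i$ but \emph{automorphy of $F$}. By Proposition~\ref{P5.1}, an automorphic measure only guarantees that $\Im F$ is $G$-invariant; $F$ itself satisfies $Fg=F+\gamma(g)$ for a homomorphism $\gamma:G\to\R$, and membership in $\mathcal H_G$ requires $\Gamma(\nu)=0$ in the sense of \eqref{Gamma}. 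If you split off a piece of $\nu$ living over one gap, the resulting automorphic measure will generically fail $\Gamma=0$, and no additive real constant can repair that.

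The paper's fix operates on the measure, not on any affine part: Theorem~\ref{T7.1} shows that for any nonzero $\nu_n\in\mathcal M(S_n)$ there are unique weights $c_n>0$ with $\Gamma(c_1\nu_1,\dots,c_N\nu_N)=0$. So after writing $\nu_1=\mu_1+\rho_1$ nontrivially, one pairs each piece with suitably reweighted copies of $\nu_2,\dots,\nu_N$ to restore $\Gamma=0$, and Corollary~\ref{C7.1} then forces the two resulting measures to sum back to $\nu$, yielding the convex decomposition after normalization. Your rigidity sketch for the converse is in the right spirit but should also be routed through this uniqueness: if every $\nu_n=w_n\delta_{x_n}$, then any decomposition in $X$ must share the same support points $x_n$, and the uniqueness of the weights in Theorem~\ref{T7.1} together with $\nu(\R_\infty)=1$ forces the two summands to equal $\nu$.
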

Here, the convexity refers to the natural linear structure on the $F$ functions, so for example the convex combinations of $H_1,H_2\in\mathcal D(U)$ are
the coefficients functions $H$ with $F$ functions $cF(z;H_1)+(1-c)F(z;H_2)$.

Theorem \ref{T1.2} implies that a continuous linear functional on $\mathcal D(U)$ assumes its extreme values on $\mathcal D_0(U)$.
An interesting example is provided by the Dirac potential itself, which indeed depends linearly on the $F$ function: we have the formula $F'(i)=c(q(0)+ip(0))$ \cite{RemZ},
and here we write $W=\bigl( \begin{smallmatrix}p & q\\q & -p\end{smallmatrix}\bigr)$, and we normalized $W$ by requiring that
$\tr W(x)=0$; compare \cite{LevSar,RemZ}. Since $\mathcal D(U)$, $\mathcal D_0(U)$ are invariant under shifts,
it follows that $\|W(x)\|$ for any $x\in\R$ is maximized by a $W\in\mathcal D_0(U)$ (we use sloppy but convenient notation here; it would be more formally
accurate to write $H\in\mathcal D_0(U)$, where $H=H_W$ is the associated canonical system). The following was proved in \cite{Remexp}, by different methods.
\begin{Theorem}[\cite{Remexp}]
\label{T1.3}
If $W\in\mathcal D(U)$, $\tr W(x)=0$, then
\[
\|W(x)\| \le \frac{1}{2} \sum_{n=1}^N (b_n-a_n)
\]
for all $x\in\R$. Equality at a single $x=x_0\in\R$ implies that $W\in\mathcal D_0(U)$.
\end{Theorem}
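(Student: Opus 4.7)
The plan is to deduce Theorem \ref{T1.3} from Theorem \ref{T1.2} by recognizing $\|W(x_0)\|$ as a multiple of the modulus of a continuous linear functional on $\mathcal D(U)$ and then invoking Bauer's maximum principle, supplemented by Choquet's theorem for the equality statement.

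Since $\mathcal D(U)$ is shift invariant I may take $x_0=0$. The matrix $W(0)=\bigl(\begin{smallmatrix}p(0)&q(0)\\q(0)&-p(0)\end{smallmatrix}\bigr)$ is real symmetric and traceless, so its operator norm equals $\sqrt{p(0)^2+q(0)^2}=|q(0)+ip(0)|$. In view of the formula $F'(i;H_W)=c(q(0)+ip(0))$ recalled in the introduction, we have
\[
\|W(0)\| = \tfrac{1}{c}\,|F'(i;H_W)| ,
\]
and the task reduces to bounding $|F'(i;H_W)|$ on $\mathcal D(U)$ and characterizing equality.

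Since $F\mapsto F'(i)$ is continuous and complex-linear on $\mathcal H_G$, composing with the homeomorphism of Theorem \ref{T1.1}(a) gives, for each $\theta\in[0,2\pi)$, a continuous real-linear functional
\[
L_\theta(W) = \Re\bigl(e^{i\theta}F'(i;H_W)\bigr)
\]
on the compact convex set $\mathcal D(U)$. Its extreme points are precisely $\mathcal D_0(U)$ by Theorem \ref{T1.2}, so Bauer's maximum principle yields $\max_{\mathcal D(U)}L_\theta=\max_{\mathcal D_0(U)}L_\theta$, and optimizing over $\theta$ gives
\[
\sup_{W\in\mathcal D(U)} |F'(i;H_W)| = \sup_{W\in\mathcal D_0(U)} |F'(i;H_W)| .
\]
On $\mathcal D_0(U)$ the bound $\|W(0)\|\le\tfrac12\sum_n(b_n-a_n)$ can then be read off from the classical trace formulas expressing the Dirac potential in terms of Dirichlet data in the spectral gaps of a finite gap operator, which completes the inequality.

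The delicate part is the equality statement. If $\|W(0)\|$ attains the bound then $|F'(i;H_W)|=M$, the common maximum. Since $\mathcal D(U)$ is a metrizable compact convex set, Choquet's theorem represents $W$ as the barycenter of a probability measure $\mu$ on $\mathcal D_0(U)$, and hence $F'(i;H_W)=\int F'(i;H_{W'})\,d\mu(W')$. The chain
\[
M = |F'(i;H_W)| \;\le\; \int |F'(i;H_{W'})|\,d\mu(W') \;\le\; M
\]
forces $F'(i;H_{W'})\equiv Me^{i\theta_0}$ on $\operatorname{supp}\mu$ for some fixed $\theta_0$. To conclude that $W\in\mathcal D_0(U)$ one must show that the level set $\{W_*\in\mathcal D_0(U):F'(i;H_{W_*})=Me^{i\theta_0}\}$ is a single point, so that $\mu$ is a Dirac mass. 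I expect this injectivity assertion on the finite gap torus to be the main obstacle, and I would tackle it either by direct analysis of the $F'(i)$-map on the Jacobi-type parameter space of $\mathcal D_0(U)$, or by importing the equality case from \cite{Remexp}.
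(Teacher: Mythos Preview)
The paper does not actually prove Theorem~\ref{T1.3}; it is stated with attribution to \cite{Remexp}, where it is established ``by different methods.'' The paragraph preceding the statement only records the observation that, by Theorem~\ref{T1.2} and the formula $F'(i)=c(q(0)+ip(0))$, the maximum of $\|W(x)\|$ over $\mathcal D(U)$ is attained on $\mathcal D_0(U)$. This is precisely your Bauer-type reduction, so on the inequality side your proposal and the paper's discussion coincide exactly; neither carries out the computation of the maximum on the finite-gap torus that produces the value $\tfrac12\sum_n(b_n-a_n)$.

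Your Choquet argument for the equality case goes beyond what the paper attempts, and the logic up to the identification of the missing lemma is sound: equality forces the representing measure $\mu$ on $\mathcal D_0(U)$ to be supported on the level set $\{F'(i)=Me^{i\theta_0}\}$, and $W\in\mathcal D_0(U)$ follows once that level set is a singleton. You correctly flag this injectivity statement as the genuine gap. It is not supplied anywhere in the present paper, so completing the proof along these lines really does require either an independent analysis of the map $W\mapsto F'(i;H_W)$ on the torus or, as you say, importing the equality case from \cite{Remexp}. That last option is exactly what the paper itself does for the full theorem, so your proposal is consistent with the paper's treatment, just more explicit about where the outside input is needed.
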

Another interesting linear functional on $\mathcal D(U)$ is given by the weights of certain associated measures. We'll discuss this briefly in Section 9.

We also obtain from Theorem \ref{T1.2} and Choquet's theorem a formula for the $F$ function (or $m_{\pm}$) of a general $H\in\mathcal D(U)$
as an average (or integral) of the corresponding functions of the $H\in\mathcal D_0(U)$, which have explicit representations; compare \eqref{hfcn} below.
We will not spell this out here.

Furthermore, the formalism developed in this paper sheds light on the topology of $\mathcal R(U)$ and especially $\mathcal D(U)$.
This is perhaps best discussed when we have a full understanding of the details, in Section 9, but let me at least mention here
that we will obtain, among other things, a natural homeomorphism
\begin{equation}
\label{1.7}
\mathcal D(U)\cong \mathcal M_1(S_1)\times\ldots\times\mathcal M_1(S_N) .
\end{equation}
Here, we can for now pretend that $S_n$ is a circle that is obtained by gluing together two copies of the $n$th gap $[a_n,b_n]$ at the endpoints
(later, $S_n$ will really be a preimage of this under $\varphi$);
$\mathcal M_1(X)$ denotes the space of (Borel) probability measures on $X$, with the weak $*$ topology. So $\mathcal M_1(S_n)$
is a compact metric space itself. I remark parenthetically that it is homeomorphic to the space $\{F\in\mathcal H: F(i)=i\}$, via the
Herglotz representation formula \eqref{hergl} below, and also to the Hilbert cube $[0,1]^{\mathbb N}$ \cite{Kel,Klee}. This last fact shows that
the spaces from \eqref{1.7} for different values of $N\ge 1$ are all homeomorphic to each other.

Nevertheless, it makes sense to set up a homeomorphism in this way. The map implicit in \eqref{1.7} interacts well with the linear structure on the
$F$ functions of the $H\in\mathcal D(U)$ that Theorem \ref{T1.2} refers to. Moreover, \eqref{1.7} provides a neat picture
of how to find $\mathcal D_0(U)$ inside the much larger space $\mathcal D(U)$.

It is well known and quite easy to show (we will review the argument in Section 5) that $\mathcal D_0(U)\cong T^N$, an $N$-dimensional torus.
As we will see in Section 7, the $H\in\mathcal D_0(U)$ correspond to the measures $(\delta_{x_1}, \ldots , \delta_{x_N})$, $x_n\in S_n$,
on the right-hand side of \eqref{1.7}. Since $\{\delta_x: x\in S_n\}\cong S_n$, this recovers the description of $\mathcal D_0 (U)$ as a torus
and shows how this space sits inside $\mathcal D(U)$.

\textit{Acknowledgments. }I thank Max Forester for help with preparing Figure 1 and Robert Furber for bringing to my attention the classical results \cite{Kel,Klee}
on the topology of $\mathcal M_1(S)$, in a Math Overflow answer.
\section{Fuchsian groups and the universal cover}
A \textit{Fuchsian group }$G$ may be defined as a discrete subgroup
\[
G\le\operatorname{PSL}(2,\R)=\SL(2,\R)/\{ \pm 1\} .
\]
The subject is, of course, classical. See, for example, \cite{Bea,Kat,Leh} for general introductions. Its use in spectral theory was
pioneered by Peherstorfer, Sodin, and Yuditskii \cite{PehYud,SodYud}. Further developments are due to Christiansen, Simon, and Zinchenko \cite{CSZ1,CSZ2}.
We will make heavy use of the basic formalism of \cite{CSZ1}; a reader friendly textbook style presentation is given in \cite{SimRice}.

It is perhaps worth mentioning that in the works cited above, the basic object is the meromorphic continuation of the $m$ function through the
complement of the essential spectrum, while we continue through the spectrum itself here. But this is perhaps one of the more superficial differences
since what we do here with the formalism
is quite different from how and for what purposes it is used in \cite{CSZ1,CSZ2,PehYud,SodYud}.

The domain $\Omega=\C^+\cup U\cup\C^-\subseteq\C_{\infty}$, viewed as a Riemann surface, is hyperbolic, that is,
it has the unit disk or, equivalently, the upper half plane
as its universal cover. We can obtain a unique covering map by prescribing the image and the argument of the derivative at a point, and this is what we did
in Definition \ref{D1.1}. A \textit{covering transformation }$g$ is an automorphism $g:\C^+\to\C^+$ satisfying $\varphi g=\varphi$. The automorphism
group of $\C^+$ is $\PSL(2,\R)$, with a matrix $g=\bigl ( \begin{smallmatrix} a & b \\ c & d \end{smallmatrix}\bigr)$ acting as a linear fractional transformation
\[
\begin{pmatrix} a & b \\ c & d \end{pmatrix} \cdot z = \frac{az+b}{cz+d} .
\]
This dot notation for group actions will be employed consistently in this paper. Also, we will never be very particular about the distinction between matrices $A$
and elements $g\in\PSL(2,\R)$, which are strictly speaking equivalence classes $g=\{ A,-A\}$. Of course, this is perfectly safe as long as we only perform
operations that are insensitive to an overall change of sign.

The group $G$ of covering transformations is a Fuchsian group. It is isomorphic to the fundamental group of $\Omega$, which is a free group
on $N-1$ generators. The non-identity elements $g\in G$ are \textit{hyperbolic, }that is, $|\tr g |>2$. Elliptic elements, $|\tr g|<2$, are ruled out
here because covering transformations do not have fixed points, and there are no parabolic elements, $|\tr g|=2$, because $\Omega$ does not have punctures.
Compare \cite[Section IV.9, Corollary 1]{FarKra}.

The behavior of $\varphi$ can be analyzed further by looking at the local inverse of $\varphi$ on $\C_{\infty}\setminus [a_1,b_N]$ with
$\varphi^{-1}(\infty)=i$. This is carried out in \cite{CSZ1,SimRice}. I refer the reader to these works for further details; as already mentioned,
\cite{CSZ1,SimRice} use the unit disk, not the upper half plane, as the covering space, so some small adjustments are necessary. I will now summarize those facts
that we will need below. In fact, much of this is best told by a picture.\\[1cm]
Figure 1: the covering map $\varphi$\\[1cm]
\begin{tikzpicture}[scale=1]

\draw[thick] (-6,0) -- (6,0) ;
\draw[dashed, thick] (0,0) -- (0,2.5);

\draw (.4,0) arc [start angle=180, end angle=0, radius=.8];
\draw (2.7,0) arc [start angle=180, end angle=0, radius=1.4];

\draw[thick] (-.4,0) arc [start angle=0, end angle=180, radius=.8];
\draw[thick] (-2.7,0) arc [start angle=0, end angle=180, radius=1.4];

\draw (3.16066,0) arc [start angle=180, end angle=0, radius=.20482];
\draw (3.75441,0) arc [start angle=180, end angle=0, radius=.08713];
\draw (4,0) arc [start angle=180, end angle=0, radius=.07202];

\draw (-3.16066,0) arc [start angle=0, end angle=180, radius=.20482];
\draw (-3.75441,0) arc [start angle=0, end angle=180, radius=.08714];
\draw (-4,0) arc [start angle=0, end angle=180, radius=.07202];

\draw (.8,0) arc [start angle=180, end angle=0, radius=.1];
\draw (1.03590,0) arc [start angle=180, end angle=0, radius=.06429];
\draw (1.34884,0) arc [start angle=180, end angle=0, radius=.13891];

\draw (-.8,0) arc [start angle=0, end angle=180, radius=.1];
\draw (-1.03590,0) arc [start angle=0, end angle=180, radius=.06429];
\draw (-1.34884,0) arc [start angle=0, end angle=180, radius=.13891];

\scriptsize
\draw[anchor=south] (0,-.5) node{$A_1$};
\draw[anchor=south] (.4,-.5) node{$B_1$};
\draw[anchor=south] (-.4,-.5) node{$B_1$};
\draw[anchor=south] (2,-.5) node{$A_2$};
\draw[anchor=south] (-2,-.5) node{$A_2$};
\draw[anchor=south] (2.7,-.5) node{$B_2$};
\draw[anchor=south] (-2.7,-.5) node{$B_2$};
\draw[anchor=south] (5.5,-.5) node{$A_3$};
\draw[anchor=south] (-5.5,-.5) node{$A_3$};
\end{tikzpicture}\\[1cm]
This describes the covering map $\varphi$ in the case $N=3$. The labels indicate images, so for example the point with label $A_1$, which is really just $z=0$,
has image $\varphi(A_1)=a_1$ etc. Here, we already make use of the fact that $\varphi$ can be extended from its original domain $\C^+$ through parts
of the real axis, which is discussed in more detail at the end of this section.
Similarly, the semicircles connecting $A_3$ and $B_2$, say, are mapped to $(b_2,a_3)$ under $\varphi$. The imaginary axis, depicted as a dashed line,
is mapped onto the subinterval $(b_3,a_1)\subseteq\R_{\infty}$. In particular, $\varphi(\infty)=b_3$.

The map $\varphi$ is symmetric about the imaginary axis in the sense that $\varphi(-\overline{z})=\overline{\varphi(z)}$. The exterior of the four large
circles, with the circles on the left included (shown in bold in the picture) but not the ones on the right, is a fundamental set:
it contains exactly one point from each orbit $G\cdot z$, $z\in\C^+$. I mention in passing that its interior is the Dirichlet region of $z_0=i$, that is, it contains
from each orbit that point that is closest to $z_0$ in the hyperbolic distance of $\C^+$.

The maps $g=IR$, with $R(z)=-\overline{z}$ being the reflection about the imaginary axis and $I$ denoting inversion about one of the large circles in
the right quarter plane, generate $G$. Here, an \textit{inversion }about the circle $|z-c|=r$ is defined as $I(z)=c+r^2/(\overline{z-c})$.

In the situation depicted in Figure 1, there are two such generators $g_1,g_2$. If we apply one of the transformations $g^{\pm 1}_j$, $j=1,2$,
to the four large circles, then we obtain one large circle and the three small next generation circles inside. The region bounded by these circles is another fundamental region.
The whole basic picture repeats itself on this smaller
scale, so the intervals on the real line in the closure of the smaller fundamental region are again mapped to the gaps $(a_j,b_j)$,
with each gap occurring twice in this way, corresponding to approach from above and below. Finally, all of this can of course be continued indefinitely,
by applying the generators of $G$ to these smaller circles etc.

The \textit{limit set }$L=L(G)$ can be defined as the collection of all limit points of the form $z=\lim g_n\cdot i$, with $g_n\in G$ being distinct
elements of $G$. We obtain the same set if we instead collect the limit points $\lim g_n\cdot z_0$ for any point $z_0\notin L$.
The limit set is closed and invariant under $G$. In our situation, $L\subseteq\R_{\infty}$ is a Cantor set if $N\ge 3$. If $N=2$,
then $G$ is cyclic and $L=\{ x,y\}\subseteq\R_{\infty}$ consists of the two fixed points of any $g\in G$, $g\not= 1$.

For our purposes here,
the following description will be extremely useful. As a preparation, we define $I_n\subseteq\R_{\infty}$ as essentially the preimage of
the $n$th gap $[a_n,b_n]$ in the closure of the large fundamental region.
More precisely, and referring to Figure 1, we define for example $I_1=[B_1,B_1)$, where by this nonsensical looking
expression we of course mean the half open interval between the two points with these labels. (For the purposes of describing the limit set,
the fine details are actually irrelevant and for example the closed interval would work too, but this attention to detail will pay off later.)
Similarly, $I_2=[B_2,A_2)\cup [A_2,B_2)$, and again the intended interpretation of this formula is clear from the figure.
Finally, $I_3=[A_3,A_3)\subseteq\R_{\infty}$, and here we start at the right point with label $A_3$ and move to the right through $\infty$ to the other point
labeled $A_3$. Later on, we will want to give each $I_j$ the topology of a circle in the natural way, but, as mentioned, this doesn't matter right now.
We then have
\[
L^c = \R_{\infty}\setminus L = \bigcup_{g\in G}g\cdot\mathcal F , \quad \mathcal F= \bigcup_{n=1}^N I_n ;
\]
compare \cite[eqn.\ (9.6.41)]{SimRice}. In fact, $\mathcal F$ is a fundamental set for $L^c$: if we had $g\cdot x =y$ for $x,y\in\mathcal F$,
then the mapping properties of the extended version of $\varphi$ that we will discuss below show at once that $x,y\in I_n$ would have to lie
in the same interval and in fact in its interior,
but this would then contradict the fact that the region outside the large circles is a fundamental region if we had $x\not= y$.

Finally, let's state precisely in what way exactly $\varphi$ can be extended past its original domain. The following is essentially a summary of
\cite[Theorem 9.6.4]{SimRice}, translated from the unit disk to the upper half plane.

We have a holomorphic extension
$\varphi: \C^+\cup L^c \cup\C^-\to\C_{\infty}$. This map is onto, and $\varphi(L^c)=\R_{\infty}\setminus U = \bigcup [a_j,b_j]$.
So the extension is by reflection $\varphi(z)=\overline{\varphi(\overline{z})}$, $z\in\C^-$. Obviously, the extended map can no longer be an unbranched covering
of its image $\C_{\infty}$. Rather, we have $\varphi'(z)=0$ precisely when $\varphi(z)= a_n$ or $=b_n$.
At these points, $\varphi''(z)\not= 0$. Finally, the extended map still satisfies $\varphi g=\varphi$ for all $g\in G$.
\section{Proof of Theorem \ref{T1.1}}
We pause the general development of the topic of the previous section to insert the rather routine proof of Theorem \ref{T1.1} here, but
will then return to it in the following section.

(a) The map $H\mapsto F(\cdot;H)$ is obviously injective since we can recover $m_{\pm}$ and thus also the canonical system itself (by \cite[Theorem 5.1]{Rembook})
from the $F$ function.

To prove that it is surjective, let $F_0\in\mathcal H_G$ be an arbitrary $G$ automorphic Herglotz function. To produce a canonical system
that has $F_0$ as its $F$ function we simply retrace the steps that lead from a given canonical system to its $F$ function. So define $M:\Omega\to\overline{\C^+}$,
$M(z)=F_0(\varphi^{-1}(z))$, and here we mean by $\varphi^{-1}$ any holomorphic local inverse of $\varphi$. These exist since $\varphi$ is a covering map. It doesn't
matter which local inverse we use here since $F_0$ is automorphic and the group $G$ of covering transformations acts transitively on the fibers $\varphi^{-1}(\{ z\})$.

Clearly, $M$ is holomorphic, and this function does map to $\overline{\C^+}$, as claimed. Next, let
\[
m_+(z) = M(z) , \quad m_-(z) = -\overline{M(\overline{z})} , \quad z\in\C^+ .
\]
These are Herglotz functions and thus there is a unique canonical system $H$ that has these functions as its half line $m$ functions.
The limits $m_{\pm}(x)=\lim_{y\to 0+} m_{\pm}(x+iy)$ exist for all $x\in U\subseteq\Omega$. Moreover, we can make both $\varphi^{-1}(x+iy)$
and $\varphi^{-1}(x-iy)$ approach the same point $\varphi^{-1}(x)\in\C^+$ (on one of the four large circles or the imaginary axis, say).
It follows that \eqref{refless} holds on $U$, and thus $H\in\mathcal R(U)$. By construction, $F(\lambda; H)=F_0(\lambda)$.

We have established that the map $\mathcal R(U)\to\mathcal H_G$, $H\mapsto F(\cdot; H)$, is a bijection.
Continuity in both directions is obvious since the topologies on both spaces
refer to locally uniform convergence and we are only changing variables. (It would actually be enough to confirm continuity of either the map
or its inverse since we are mapping between compact metric spaces, so have an automatic continuity result available.)

(b) This depends on an inverse spectral theory result and I don't want to get into the (unfortunately considerable)
intricacies of this topic here. Basically, the result holds because for $H\in\mathcal R(U)$, the $m$ functions are holomorphic at $z=\infty$,
and the condition $F(i)=i$ is equivalent to $m_{\pm}(\infty)=i$. This asymptotic behavior, in a generalized version, is a well known necessary condition for a Herglotz
function to be the $m$ function of a Dirac operator \cite{EHS}, and it is also sufficient here because we are dealing only with the specialized system from $\mathcal R(U)$.

In any event, the argument is identical
to the one presented in the proof of \cite[Theorem 3.2]{RemZ}; please see this reference for further details.
\section{Transformation of measures}
Herglotz functions $F$ have unique representations
\begin{equation}
\label{hergl}
F(z) = a + \int_{\R_{\infty}} \frac{1+tz}{t-z}\, d\nu(t) ,
\end{equation}
with $a=\Re F(i)\in\R_{\infty}$, and $\nu$ is a finite positive Borel measure on $\R_{\infty}$.

Let me state a few basic facts on how to recover $\nu$ from the boundary behavior of $\Im F(z)$, without attempting to give a systematic review.
We will make frequent use of these in the sequel. See for example \cite[Appendix B]{Teschl}, or pretty much any textbook on spectral theory, for a more comprehensive review
of the subject.

First of all, $F(t)\equiv\lim_{y\to 0+} F(t+iy)$ exists for (Lebesgue) almost all $t\in\R$. We have
\[
d\nu_{\textrm{\rm ac}}(t) = \frac{1}{\pi} \frac{\Im F(t)}{1+t^2}\, dt ,
\]
and the singular part $\chi_{\R}\nu_{\textrm s}$ is supported by
\[
\{t\in\R : \lim_{y\to 0+}\Im F(t+iy)=\infty \} .
\]
Finally, point masses correspond to pole type asymptotics; more precisely,
\[
(1+t^2)\nu (\{ t\} ) = \lim_{y\to 0+} -iyF(t+iy) , \quad \nu(\{\infty\}) =\lim_{y\to 0+} -iyF(i/y) .
\]

For any Herglotz function $F$ with
associated measure $\nu$, we define $\nu_g$ as the measure of $F g$. Since $\nu$ is already determined by the function $\Im F(z)$, the measure
$\nu_g$ indeed only depends on $\nu$ and $g$ and not on $a$ from \eqref{hergl}, as suggested by the notation.

We also denote by $g\nu$ the image measure $(g\nu)(B)=\nu(g^{-1}\cdot B)$. Recall that this obeys the substitution rule
\begin{equation}
\label{subst}
\int_{g\cdot A} f(y)\, d(g\nu)(y) = \int_A f(g\cdot x)\, d\nu(x) ,
\end{equation}
for $f\in L^1(\R_{\infty}, g\nu)$ or measurable $f\ge 0$.
\begin{Lemma}
\label{L3.1}
We have
\begin{equation}
\label{3.1}
d\nu_g(t) = \frac{\|w(g\cdot t)\|^2}{\|g^{-1}w(g\cdot t)\|^2}\, d(g^{-1}\nu)(t) , \quad w(x)\equiv \begin{pmatrix} x \\ 1 \end{pmatrix} .
\end{equation}
\end{Lemma}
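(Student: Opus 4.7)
The key observation is that a finite positive Borel measure $\nu$ on $\R_\infty$ is uniquely determined by the Poisson-type representation
\[
\frac{\Im F(z)}{\Im z} = \int_{\R_\infty} \frac{\|w(t)\|^2}{|t-z|^2}\,d\nu(t), \qquad \Im z > 0,
\]
which follows directly from the Herglotz formula \eqref{hergl} after computing $\Im\frac{1+tz}{t-z} = \frac{(1+t^2)\Im z}{|t-z|^2}$ (with the natural limiting convention at $t=\infty$, where the kernel equals $1$). So it suffices to compute $\Im (Fg)(z)/\Im z$ in two ways and compare.

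On the one hand, applying the identity above to $Fg$ and using $\Im(g\cdot z) = \Im z/|cz+d|^2$ for $g=\bigl(\begin{smallmatrix}a & b\\ c & d\end{smallmatrix}\bigr)\in\PSL(2,\R)$ gives
\[
\frac{\Im (Fg)(z)}{\Im z} = \frac{1}{|cz+d|^2}\int_{\R_\infty} \frac{\|w(\tau)\|^2}{|\tau - g\cdot z|^2}\,d\nu(\tau).
\]
I would now substitute $\tau = g\cdot s$ in this integral, invoking the substitution rule \eqref{subst} in the form $\int f(\tau)\,d\nu(\tau) = \int f(g\cdot s)\,d(g^{-1}\nu)(s)$. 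The three elementary $\PSL(2,\R)$ identities I need are $g\cdot s - g\cdot z = (s-z)/[(cs+d)(cz+d)]$, together with $gw(s) = (cs+d)w(g\cdot s)$ (from which $\|w(g\cdot s)\|^2 = \|gw(s)\|^2/(cs+d)^2$ follows). After substitution, all the factors of $(cs+d)^2$ and $|cz+d|^2$ cancel, leaving
\[
\frac{\Im (Fg)(z)}{\Im z} = \int_{\R_\infty} \frac{\|gw(s)\|^2}{|s-z|^2}\,d(g^{-1}\nu)(s).
\]

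On the other hand, by definition of $\nu_g$ and the Poisson representation, the left-hand side also equals $\int \|w(s)\|^2 |s-z|^{-2}\,d\nu_g(s)$. Uniqueness of the representing measure then forces
\[
d\nu_g(s) = \frac{\|gw(s)\|^2}{\|w(s)\|^2}\,d(g^{-1}\nu)(s).
\]
Finally, to recast this in the form stated in \eqref{3.1}, I apply $g^{-1}$ to the identity $gw(t)=(ct+d)w(g\cdot t)$ to get $g^{-1}w(g\cdot t) = w(t)/(ct+d)$, whence
\[
\frac{\|w(g\cdot t)\|^2}{\|g^{-1}w(g\cdot t)\|^2} = \frac{\|gw(t)\|^2/(ct+d)^2}{(1+t^2)/(ct+d)^2} = \frac{\|gw(t)\|^2}{\|w(t)\|^2},
\]
matching the Radon--Nikodym derivative above.

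\textbf{Expected difficulty.} The argument is essentially a bookkeeping exercise once the Poisson-type representation is in hand; the only subtlety is keeping the $\PSL(2,\R)$ identities straight and confirming that the interpretations at $\tau=\infty$ and $s=\infty$ are consistent, which they are by continuity of the kernels. Nothing deep is required beyond the standard change-of-variable manipulation for Möbius transformations.
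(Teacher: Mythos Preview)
Your argument is correct, and it takes a genuinely different route from the paper's proof. The paper first treats the special case where $F$ extends continuously to $\C^+\cup\R_\infty$, so that $\nu$ and $\nu_g$ are absolutely continuous with explicit densities $\frac{1}{\pi}\frac{\Im F(t)}{1+t^2}$; the identity \eqref{3.1} is then obtained by a real-variable substitution $x=g\cdot t$ in these densities. The general case is handled by approximating an arbitrary $F$ by such functions and passing to the limit in the weak~$*$ topology. Your approach bypasses this two-step structure entirely: by working with the Poisson-type integral $\Im F(z)/\Im z=\int \|w(t)\|^2|t-z|^{-2}\,d\nu(t)$ for $z\in\C^+$ and performing the M\"obius substitution inside the integral, you obtain the result for \emph{all} Herglotz functions at once, invoking only the uniqueness of the representing measure. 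This is cleaner and avoids the approximation argument; the paper's version, on the other hand, is slightly more self-contained in that it does not appeal to uniqueness of the Poisson representation as a black box. Both the $\PSL(2,\R)$ identities you use and the final rewriting of the density as $\|w(g\cdot t)\|^2/\|g^{-1}w(g\cdot t)\|^2$ check out.
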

This formula also works for $t=g^{-1}\cdot\infty$, if interpreted in the obvious way: we can let $w(\infty)=e_1$ in this case (notice
that multiplying $w$ by a factor will not affect the quotient). Of course, we can also
express the density on the right-hand side of \eqref{3.1} in terms of the entries of $g=\bigl( \begin{smallmatrix} a & b \\ c & d \end{smallmatrix} \bigr)$:
\[
\frac{\|w(g\cdot t)\|^2}{\|g^{-1}w(g\cdot t)\|^2} = \frac{(at+b)^2+(ct+d)^2}{t^2+1} .
\]
In any event, the slightly different function $f(g;x)$ from Lemma \ref{L3.2} below will be more important in the sequel.
\begin{proof}
Assume first that $F$ has a continuous extension to $\C^+\cup\R_{\infty}$. Then $Fg$ has the same property, and thus both $\nu$ and $\nu_g$
are purely absolutely continuous. As just reviewed, we have
\[
d\nu(t) = \frac{1}{\pi} \frac{\Im F(t)}{1+t^2}\, dt ,
\]
and similarly
\[
\nu_g(B) = \frac{1}{\pi} \int_B \frac{\Im F(g\cdot t)}{1+t^2}\, dt .
\]
Since $(d/dx)(g^{-1}\cdot x)=1/(a-cx)^2$, where we again write $g=\bigl( \begin{smallmatrix} a & b \\ c & d \end{smallmatrix} \bigr)$,
the substitution $x=g\cdot t$ gives
\begin{align*}
\nu_g(B) & = \frac{1}{\pi} \int_{g\cdot B}\frac{\Im F(x)}{(a-cx)^2\|w(g^{-1}\cdot x)\|^2}\, dx \\
& =\int_{g\cdot B}\frac{1+x^2}{(a-cx)^2\|w(g^{-1}\cdot x)\|^2}\, d\nu(x) \\
& = \int_{g\cdot B}\frac{\|w(x)\|^2}{\|g^{-1}w(x)\|^2}\, d\nu(x) .
\end{align*}
Now an application of \eqref{subst} with $A=g\cdot B$ and $g^{-1}$ taking the role of $g$ in \eqref{subst} lets us further rewrite this as
\[
\nu_g(B) = \int_B \frac{\|w(g\cdot t)\|^2}{\|g^{-1}w(g\cdot t)\|^2}\, d(g^{-1}\nu)(t) ,
\]
as desired.

The general case can then be handled by approximation. Given an arbitrary Herglotz function $F$, approximate it by functions $F_n$ of the type just discussed,
$d(F_n,F)\to 0$, with $d$ denoting the metric from \eqref{defd}; in other words, $F_n\to F$ locally uniformly. Then $\nu_n\to\nu$ in weak $*$ sense, that is,
$\int f\, d\nu_n\to\int f\, d\nu$ for all $f\in C(\R_{\infty})$, and here we give $\R_{\infty}$ its natural topology as the one-point compactification of $\R$ (so
$\R_{\infty}\cong S^1$, a circle). Clearly, we also have $d(F_ng,Fg)\to 0$, and thus $(\nu_n)_g\to\nu_g$ as well. By what we just established, we have
\begin{equation}
\label{3.2}
d(\nu_n)_g(t) = h(g;t)\, d(g^{-1}\nu_n)(t) , \quad h(g;t) = \frac{\|w(g\cdot t)\|^2}{\|g^{-1}w(g\cdot t)\|^2} .
\end{equation}
Since $g$ is a homeomorphism
of $\R_{\infty}$, the substitution rule \eqref{subst} makes it clear that the image measures converge to the expected limit $g^{-1}\nu_n\to g^{-1}\nu$.
Moreover, $h(g;t)$ is also continuous on $\R_{\infty}$, and thus \eqref{3.2} implies that
\[
d\nu_g(t) = \lim_{n\to\infty} d(\nu_n)_g(t) = h(g;t)\, d(g^{-1}\nu)(t) ,
\]
as claimed.
\end{proof}
\begin{Lemma}
\label{L3.2}
\begin{equation}
\label{3.3}
\nu_{g^{-1}}(g\cdot A)= \int_A f(g;x)\, d\nu(x) , \quad f(g;x)= \frac{\|w(x)\|^2}{\|gw(x)\|^2} .
\end{equation}
The density $f$ is a \textrm{\rm cocycle: }for any $g,h\in G$, $x\in\R_{\infty}$, we have
\begin{equation}
\label{cc}
f(gh;x) = f(g; h\cdot x)f(h;x) .
\end{equation}
\end{Lemma}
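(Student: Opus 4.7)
The plan for \eqref{3.3} is to apply Lemma \ref{L3.1} with $g$ replaced by $g^{-1}$, which yields
\[
d\nu_{g^{-1}}(t) = \frac{\|w(g^{-1}\cdot t)\|^2}{\|gw(g^{-1}\cdot t)\|^2}\, d(g\nu)(t) .
\]
Integrating this over $g\cdot A$ and then invoking the substitution rule \eqref{subst} with the pushforward $g\nu$ (so the change of variables $t = g\cdot x$ takes $d(g\nu)(t)$ back to $d\nu(x)$ on $A$) turns the density into $\|w(x)\|^2/\|gw(x)\|^2 = f(g;x)$, which gives the desired identity. The only mildly delicate point is that $g^{-1}\cdot\infty$ may belong to $g\cdot A$, but as remarked after Lemma \ref{L3.1}, the ratio is insensitive to the scale of $w$ and hence well defined projectively; the convention $w(\infty)=e_1$ handles this cleanly.

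For the cocycle identity \eqref{cc}, I would work directly from the definition, starting from the scaling formula
\[
w(h\cdot x) = \frac{1}{cx+d}\, h w(x) , \qquad h = \begin{pmatrix} a & b \\ c & d \end{pmatrix} ,
\]
which is immediate from $h\cdot x = (ax+b)/(cx+d)$. Squaring norms, the common factor $(cx+d)^{-2}$ appears in both the numerator and denominator of $f(g;h\cdot x) = \|w(h\cdot x)\|^2/\|g w(h\cdot x)\|^2$ and cancels, leaving $f(g;h\cdot x) = \|hw(x)\|^2/\|ghw(x)\|^2$. Multiplying this by $f(h;x) = \|w(x)\|^2/\|hw(x)\|^2$ then telescopes at once to $\|w(x)\|^2/\|ghw(x)\|^2 = f(gh;x)$.

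I do not anticipate any real obstacle here; once Lemma \ref{L3.1} is in hand, the argument is essentially algebraic bookkeeping. The two small things to keep straight are the direction of the substitution rule when passing between $g\cdot A$ and $A$, and the consistent use of the projective convention for $w$, so that every expression remains well defined at the point whose image under $g^{-1}$ is $\infty$.
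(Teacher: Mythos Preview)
Your proposal is correct and follows exactly the route the paper indicates: the paper's own proof simply says that \eqref{3.3} ``follows from \eqref{3.1} and the substitution rule \eqref{subst}'' and that the cocycle identity is ``a straightforward calculation, which I leave to the reader.'' You have carried out precisely these two steps, with the appropriate replacement $g\mapsto g^{-1}$ in Lemma~\ref{L3.1} and the telescoping via the projective scaling $w(h\cdot x)=(cx+d)^{-1}hw(x)$, so there is nothing to add.
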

\begin{proof}
Of course, \eqref{3.3} just rephrases Lemma \ref{L3.1}; more precisely, this equation follows from \eqref{3.1} and the substitution
rule \eqref{subst}. The cocycle identity follows from a straightforward calculation, which I leave to the reader.
\end{proof}
\section{Automorphic measures}
\begin{Definition}
\label{D5.1}
We call a finite Borel measure $\nu$ on $\R_{\infty}$ \textit{automorphic }if $\nu=\nu_g$ for all $g\in G$. The collection of all automorphic measures
will be denoted by $\mathcal M_G$.
\end{Definition}
By Lemma \ref{L3.2}, one possible more explicit way of stating this condition is: $d(g^{-1}\nu)(x)=f(g;x)\, d\nu(x)$ for all $g\in G$.
One could of course also consider non-finite measures that are automorphic in this sense but they are useless for us here
since they do not occur in the Herglotz representation \eqref{hergl}.

The goal of this section and the next is to study automorphic measures in more detail and relate them to their restrictions to
the fundamental set $\mathcal F$.
Before we do this, it is perhaps worth clarifying explicitly the relation of Definition \ref{D5.1} to the $G$ invariance of the associated Herglotz functions.
\begin{Proposition}
\label{P5.1}
Let $F$ be a Herglotz function with representation \eqref{hergl}. Then the following statements are equivalent:\\
(a) $\nu=\nu_g$ for all $g\in G$;\\
(b) $(\Im F)g=\Im F$ for all $g\in G$;\\
(c) $Fg = F+a(g)$ for all $g\in G$, for certain constants $a(g)\in\R$;\\
(d) There is a homomorphism $\gamma: G\to (\R,+)$ such that $Fg=F+\gamma(g)$ for all $g\in G$.
\end{Proposition}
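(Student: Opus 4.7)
The plan is to run the cycle $(d) \Rightarrow (c) \Rightarrow (b) \Rightarrow (a) \Rightarrow (c) \Rightarrow (d)$, which establishes all four equivalences simultaneously. The first two arrows are immediate: in $(d)$ the homomorphism $\gamma$ already furnishes real constants $a(g) := \gamma(g)$ satisfying $(c)$, and because these constants are real, taking imaginary parts in the identity $Fg = F + a(g)$ --- using the juxtaposition convention $Fg = F \circ g$, so that $\Im(Fg) = (\Im F)g$ --- yields $(b)$.

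The heart of the argument is the pair $(a) \Leftrightarrow (b)$, and both directions rest on the Stieltjes-type inversion results recalled just after \eqref{hergl}. For $(b) \Rightarrow (a)$: that review expresses $\nu$ entirely in terms of boundary data of $\Im F$, through the absolutely continuous density $(1/\pi)\Im F(t)/(1+t^2)\,dt$, the divergence set of $\Im F(t+iy)$ as $y \to 0+$ for the singular part on $\R$, and the pole-type limits for point masses (including the one at $\infty$). By definition, $\nu_g$ is the measure of $Fg$, so it depends only on $\Im(Fg) = (\Im F)g$; thus $(\Im F)g = \Im F$ forces $\nu_g = \nu$. For $(a) \Rightarrow (c)$: the representation \eqref{hergl} uniquely determines $F$ from the pair $(\Re F(i), \nu)$, so two Herglotz functions with the same measure differ only by a real constant. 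Hence if $\nu_g = \nu$, then $Fg - F = \Re(Fg)(i) - \Re F(i) \in \R$, which is $(c)$ with $a(g) := \Re(Fg)(i) - \Re F(i)$.

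Finally, $(c) \Rightarrow (d)$ is a short group-theoretic check. Associativity of composition gives $(Fg)h = F \circ g \circ h = F(gh)$; applying $(c)$ twice on the left --- first to $g$ and then to $h$ --- turns this into $F + a(h) + a(g)$, while the right-hand side equals $F + a(gh)$. Thus $a(gh) = a(g) + a(h)$ and $a : G \to (\R,+)$ is a homomorphism, so we may take $\gamma = a$.

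No step presents a serious obstacle --- the whole proposition reduces to the uniqueness half of the Herglotz representation together with one line of group theory. The only real pitfall is notational: one must keep straight that $Fg$ means $F \circ g$, so that $(Fg)h$ unfolds to $F \circ g \circ h$ rather than $F \circ h \circ g$. Getting this ordering wrong would sabotage both the derivation of the cocycle-like identity and the homomorphism property at the last step.
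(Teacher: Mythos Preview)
Your proof is correct and follows essentially the same approach as the paper: the equivalence of (a), (b), (c) reduces to the fact that $\nu$ determines and is determined by $\Im F$, which in turn determines $F$ up to a real constant, and then (c) $\Rightarrow$ (d) is the short verification that $a(gh)=a(g)+a(h)$. The paper compresses the (a)--(c) equivalences into a single sentence and checks the homomorphism property by evaluating at $z_0=i$, whereas you spell out the individual implications and work with the functional identity directly, but these are cosmetic differences.
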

The parentheses in part (b) are unnecessary since $(\Im F)g =\Im (Fg)$; they emphasize that the condition can be read as stating
that the harmonic function $\Im F$ is automorphic.

The homomorphisms $\gamma: G\to\R$ form a vector space. Since $G$ is a free group on $N-1$ generators, $\{\gamma \}\cong\R^{N-1}$.
Our results below will show that any homomorphism $\gamma$ occurs as the $\gamma$ of a suitable $F$, as in part (d). See Section 7, especially Lemma \ref{L7.1}.
I mention these facts in passing; we will not use them here.
\begin{proof}
Since the measure $\nu$ is determined by and determines $\Im F$, which in turn determines $F$ up to a real constant, it is clear
that (a), (b), (c) are equivalent. If (c) holds, then of course $a(g)=Fg(z_0)-F(z_0)$ for any $z_0\in\C^+$, so
\begin{align*}
a(gh) &=F(gh)(i)-F(i) = Fg(h\cdot i)-F(h\cdot i) + Fh(i)-F(i) \\
& =a(g)+a(h) ,
\end{align*}
and $a=\gamma$ turns out to be a homomorphism, as claimed in part (d). The converse implication is trivial.
\end{proof}
\begin{Theorem}
\label{T5.1}
Suppose that $\nu\in\mathcal M_G$. Then $\nu(L)=0$.
\end{Theorem}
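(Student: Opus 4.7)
The plan is to argue by contradiction: suppose $\nu(L) > 0$. The main tool is the transformation rule from Lemma~\ref{L3.2}: for any Borel $A \subseteq \R_\infty$ and any $g \in G$,
\[
\nu(g \cdot A) = \int_A f(g;x)\, d\nu(x).
\]
Because $L$ is $G$-invariant (as the set of accumulation points of every orbit, preserved by each $g \in G$), applying this identity with $A = L$ produces the key identity
\[
\nu(L) = \int_L f(g;x)\, d\nu(x) \qquad \text{for every } g \in G. \quad (\ast)
\]

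The first step is to rule out point masses on $L$. Fix any $g \in G$ with $g \neq 1$; by Section~2 it is hyperbolic, so its fixed points $\xi^{\pm} \in \R_\infty$ lie in $L$, and $w(\xi^{\pm})$ are eigenvectors of a matrix lift of $g$ with eigenvalues $\lambda^{\pm 1}$ for some $\lambda$ with $|\lambda| > 1$. A direct computation from $f(g;y) = \|w(y)\|^2/\|g w(y)\|^2$ then gives $f(g^n;\xi^{-}) = \lambda^{2n}$, so $(\ast)$ applied to $g^n$ yields $\nu(L) \ge \lambda^{2n}\,\nu(\{\xi^{-}\})$; letting $n \to \infty$ forces $\nu(\{\xi^{-}\}) = 0$, and the analogous argument with $g^{-n}$ gives $\nu(\{\xi^{+}\}) = 0$. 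Since the fixed points of hyperbolic elements are dense in $L$, $\nu|_L$ has no atoms.

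The second step attacks the continuous part. The eigenvector decomposition $w(x) = \alpha w(\xi^{+}) + \beta w(\xi^{-})$ shows that $f(g^n;x) \to 0$ pointwise for every $x \neq \xi^{-}$, hence $\nu|_L$-almost everywhere after the first step, while $(\ast)$ keeps the integral $\int_L f(g^n;x)\, d\nu$ constantly equal to $\nu(L)$. If I can apply the dominated convergence theorem here, I immediately conclude $\nu(L) = 0$, giving the contradiction.

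The main obstacle is producing a $\nu|_L$-integrable majorant for $\{f(g^n;\cdot)\}_{n \ge 0}$. The natural candidate $\sup_n f(g^n;x)$ is comparable to $1/\operatorname{dist}(x,\xi^{-})$ near $\xi^{-}$, so its integrability requires quantitative a priori control of how $\nu$ accumulates at $\xi^{-}$---precisely the type of information the theorem is meant to extract. I expect the resolution to proceed by combining $(\ast)$ with the decomposition $L^c = \bigsqcup_{g \in G} g \cdot \mathcal F$: automorphy turns $\nu(L^c) \le \nu(\R_\infty) < \infty$ into $\int_{\mathcal F} \Phi\, d\nu < \infty$ for the Poincar\'e-type sum $\Phi(x) = \sum_g f(g;x)$, and the cocycle identity $\Phi(g\cdot y) = \Phi(y)/f(g;y)$ should transfer this summability from the interior of $\mathcal F$ to suitable bounds in shrinking neighborhoods of each $\xi^{-}_g$, yielding the majorant needed to close the argument.
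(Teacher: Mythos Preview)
Your argument has a real gap in each step. In the first, you correctly show $\nu(\{\xi^\pm(g)\})=0$ for every hyperbolic $g$, but then infer that $\nu|_L$ is atomless because fixed points are dense in $L$. That inference is invalid: a measure can vanish on every point of a dense set and still have atoms elsewhere (think of $\delta_{\sqrt 2}$ and $\mathbb Q\subset\R$). To rule out an atom at a non-fixed point $x\in L$ you would need $\sum_{g\in G}f(g;x)=\infty$, since $\sum_g\nu(\{g\cdot x\})=\nu(\{x\})\sum_g f(g;x)$ must be finite; but that divergence on $L$ is precisely Corollary~\ref{C5.1}, whose proof in the paper \emph{uses} Theorem~\ref{T5.1}, so invoking it here would be circular.

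In the second step you yourself flag the missing majorant, and the proposed resolution does not supply one. The bound $\int_{\mathcal F}\Phi\,d\nu<\infty$ and the cocycle relation $\Phi(g\cdot y)=\Phi(y)/f(g;y)$ concern $\nu$ on $L^c$ only; they carry no information about $\nu|_L$. In the extreme case $\chi_{L^c}\nu=0$ these hypotheses hold trivially while saying nothing about $L$, so no dominated-convergence majorant on $L$ can be extracted from them. The paper bypasses all of these orbit estimates: it notes that $\chi_L\,d\nu$ is again automorphic, forms the non-negative harmonic function $H(\lambda)=\Im\int_L\frac{1+t\lambda}{t-\lambda}\,d\nu(t)$, which extends continuously by zero to $L^c$, and descends it via $\varphi^{-1}$ to a harmonic $K\ge 0$ on $\Omega$ that extends continuously by zero to all of $\C_\infty$. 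The maximum principle then forces $K\equiv 0$, hence $\nu(L)=0$.
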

Recall that $L\subseteq\R_{\infty}$ denotes the limit set of $G$. This set is closed and invariant under $G$.
\begin{proof}
If $A\subseteq\R_{\infty}$ is a $G$ invariant set, then so is $A^c$, and now Lemma \ref{L3.1} shows that if $\nu$ is automorphic,
then so is $\chi_A\, d\nu$. We apply this remark to $A=L$ to obtain the automorphic measure $\chi_L\, d\nu$. Then we consider the corresponding harmonic function
\[
H(\lambda)=\Im \int_L \frac{1+t\lambda}{t-\lambda}\, d\nu(t) = \Im\lambda \int_L \frac{1+t^2}{|t-\lambda|^2}\, d\nu(t) , \quad\lambda\in\C^+ .
\]
Observe that $H$ has a continuous extension to $\C^+\cup (\R_{\infty}\setminus L)$, and $H=0$
on $\R_{\infty}\setminus L$.

Proposition \ref{P5.1}(b) shows that
$Hg=H$ for all $g\in G$. So as in the proof of Theorem \ref{T1.1}(a), we can define a harmonic function
$K:\Omega\to [0,\infty)$, $K(z)=H(\varphi^{-1}(z))$, using local inverses of $\varphi$.

Now the description of the mapping behavior of $\varphi$ from Section 2 shows that if $z\to x\notin\Omega$, then we can make
$\varphi^{-1}(z)$ approach $L^c$; more precisely, we can arrange that $\operatorname{dist}(\varphi^{-1}(z), \{ s,t\})\to 0$,
where $s,t$ are two preimages of $x$, for example in the same $I_n$. (We cannot guarantee convergence to a single limit because
preimages of two small semidisks above and below $x$ need not be close to each other.)

Since $H=0$ on $L^c$, no matter where exactly we are, this shows
that $K$ has a continuous extension to all of $\C_{\infty}$. This extended function has a maximum on its compact domain, and since
$K=0$ on $\C_{\infty}\setminus\Omega = \bigcup [a_j,b_j]$, the maximum is assumed at a point of $\Omega$, where $K$ is harmonic.
So $K, H\equiv 0$ and hence $\nu(L)=0$, as asserted.
\end{proof}

Theorem \ref{T5.1}, combined with Lemma \ref{L3.2}, shows that an automorphic measure is determined by its restriction to the fundamental set
$\mathcal F = \bigcup_{n=1}^N I_n$ for $L^c$. Lemma \ref{L3.2} also suggests the following procedure for constructing general automorphic measures: start out
with any finite measure on $\mathcal F$, and then propagate it to $L^c=G\cdot\mathcal F$ by using \eqref{3.3}, and now of course $\nu_{g^{-1}}=\nu$ if we want to
obtain an automorphic measure.

We must verify two things here: (1) The extended measure is still finite on $L^c$; (2) It is indeed automorphic.

The first point is addressed by Lemma \ref{L5.1} below, and the second one will be the content of Theorem \ref{T5.2}.
The question of whether series such as the ones from \eqref{5.1} converge, and for what exponents, is classical for general Fuchsian groups and has
been investigated extensively. This is no coincidence because our construction of automorphic measures is quite similar in spirit
to the construction of automorphic functions via Poincar\'e series. See for example \cite{Bea2,Pat}.

We will not rely on this theory here. In our more specialized situation, we can give a considerably simpler and perhaps more transparent treatment from scratch
if we study the issue not in isolation, as a question about general Fuchsian groups exclusively, but in the context of the spectral theory in which it arose here in the first place.
\begin{Lemma}
\label{L5.2}
For any $x\in L^c$, there is an $F\in\mathcal H_G$ whose associated automorphic measure satisfies $\nu( \{ x\} )>0$.
In fact, $F$ can be the $F$ function of an $H\in\mathcal D_0(U)$.
\end{Lemma}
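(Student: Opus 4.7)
The plan is to produce $F$ as the $F$ function of a finite-gap operator $H \in \mathcal D_0(U)$ whose relevant Dirichlet eigenvalue sits at $\mu := \varphi(x)$, and to read off the point mass at $x$ from the atom formula $(1+x^2)\nu(\{x\}) = \lim_{y\to 0+}(-iy)F(x+iy)$ recalled at the beginning of Section~4.

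Given $x \in L^c$, the extended map $\varphi$ sends $x$ to some $\mu \in [a_n, b_n]$ for a uniquely determined $n$. Invoking the classical parametrization $\mathcal D_0(U) \cong T^N$ from finite-gap theory (also to be reviewed in Section~5), I would pick $H \in \mathcal D_0(U)$ whose $n$th Dirichlet eigenvalue equals $\mu$, with arbitrary choice in the other $N-1$ gaps. For such $H$, the combined $m$ function $M$ extends meromorphically into the open gap $(a_n, b_n)$ and has a specific singularity at $\mu$: a simple pole if $\mu \in (a_n, b_n)$, or an inverse-square-root singularity $M(z) \sim c(z-\mu)^{-1/2}$ if $\mu \in \{a_n, b_n\}$, where the Dirichlet pole merges with the natural branch point of $M$ arising from the $\sqrt{\prod_k(z - a_k)(z - b_k)}$ factor in the finite-gap expression for $m_\pm$.

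Next I would compute $F = M \circ \varphi$ near $x$, using the local behavior of $\varphi$ recalled at the end of Section~2. For $\mu$ interior to the gap, $\varphi'(x) \neq 0$, so $\varphi$ is a local biholomorphism at $x$ and a simple pole of $M$ pulls back to a simple pole of $F$. For $\mu$ a band edge, $\varphi'(x) = 0$ with $\varphi''(x) \neq 0$, so $\varphi(\lambda) - \mu \sim \tfrac{1}{2}\varphi''(x)(\lambda - x)^2$ and $(\varphi(\lambda) - \mu)^{-1/2} \sim c'/(\lambda - x)$; hence $F$ again develops a simple pole at $x$. The atom formula then yields $\nu(\{x\}) > 0$. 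As a consistency check, the construction simultaneously places atoms at every point of $\varphi^{-1}(\mu) = G \cdot x$, exactly as required by automorphy: Lemma~\ref{L3.2} applied to $A = \{x\}$ predicts $\nu(\{g \cdot x\}) = f(g;x)\,\nu(\{x\}) > 0$ for all $g \in G$.

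The main obstacle is the band-edge sub-case: justifying the inverse-square-root singularity of $M$ at a Dirichlet eigenvalue that coincides with a band edge. This is classical in finite-gap theory and follows from the explicit representation of $m_\pm$ via a Dirichlet polynomial and $\sqrt{\prod_k(z - a_k)(z - b_k)}$, but must either be cited or derived from scratch. Alternatively, one might bypass the sub-case by a continuity argument: approximate with operators whose $n$th Dirichlet data is interior, $\mu_k \to \mu$, and exploit compactness of $\mathcal H_G$. This route trades the explicit calculation for the separate problem of showing that the atom at $x_k \to x$ does not dissipate under weak-$*$ convergence, which is itself delicate.
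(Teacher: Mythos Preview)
Your strategy is the paper's: choose $H\in\mathcal D_0(U)$ with $\mu_n=\varphi(x)$ and read off the atom of $F$ at $x$. But there is a genuine gap in the interior sub-case. The function $M$ does \emph{not} extend through the gap $(a_n,b_n)$ as a single-valued meromorphic function: approaching $\mu\in(a_n,b_n)$ from above gives $m_+$, from below gives $-\overline{m_-(\overline{\,\cdot\,})}$, and for $H\in\mathcal D_0(U)$ the pole at $\mu_n$ lives in exactly one of $m_+,m_-$, recorded by the sign $\sigma_n=\pm 1$ in the torus parametrization $\widehat\mu_n=(\mu_n,\sigma_n)$. When you pull back via $\varphi$, a small upper-half-plane neighborhood of $x$ lands in a \emph{one-sided} neighborhood of $\mu$, so only one of $m_+,m_-$ is visible from $x$. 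If $\sigma_n$ is chosen with the wrong sign, $F$ stays bounded at $x$ and there is no atom. The paper handles this by setting $\sigma=\operatorname{sgn}\Im\varphi(x+iy)$ for small $y>0$ and then taking $\widehat\mu_n=(\mu,\sigma)$; you need this step too.

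Your band-edge argument---combine the $(z-\mu)^{-1/2}$ singularity of $m_\pm$ with the quadratic vanishing $\varphi(\lambda)-\mu\sim\tfrac12\varphi''(x)(\lambda-x)^2$ to produce a simple pole of $F$---is correct and is exactly the mechanism the paper spells out in the paragraph following its proof. The paper's own argument for this sub-case is more indirect: it shows that $\Im F$ extends continuously by zero to a punctured real neighborhood of $x$ while $|F(x+iy)|\to\infty$, forcing $\nu(\{x\})>0$ by exclusion. Either route works; yours is shorter once the inverse-square-root asymptotic is in hand, and that follows from \eqref{hfcn}. Your proposed continuity alternative, by contrast, is not needed and would indeed run into the weak-$*$ escape problem you anticipate.
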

This depends on a well known parametrization of $\mathcal D_0(U)$. This material is almost classical but since we will also need it in Section 7 below,
let me give a very quick review. The basic idea goes back to Craig \cite{Craig}. In the form used here, the method is discussed in detail
in \cite[Sections 2, 3]{ForRem}, but see also, for example, \cite{ConJo}.

Given an $H\in\mathcal D_0(U)$, let $h(z)=m_+(z)+m_-(z)$, and consider the Krein function of this Herglotz function, which is defined (almost everywhere) as
$\xi(t)=(1/\pi)\Im\log h(t)$, $0\le\xi(t)\le 1$. Since $H$ is reflectionless on $U$, we have $\Re h=0$ there, so $\xi=1/2$ on $U$. The further condition
$\sigma(H)\subseteq\overline{U}$ then implies that $\xi(t)=\chi_{(\mu_n,b_n)}(t)$ on each gap $a_n<t<b_n$, for some $\mu_n\in [a_n,b_n]$. The $\mu_n$ determine $\xi$
and thus already let us recover $h$ as
\begin{equation}
\label{hfcn}
h(z) = 2i \prod_{n=1}^N \frac{\sqrt{(a_n-z)(b_n-z)}}{\mu_n-z} ;
\end{equation}
compare \cite[eqn.\ (2.4)]{ForRem}. Note that $h$ has a pole at each $\mu_n$ which satisfies $\mu_n\not= a_n,b_n$, and this leads to a point mass
in the representing measure.

The function $h(z)$ does not uniquely determine $H$; rather, we must introduce the additional parameters $\sigma_n=\pm 1$.
These indicate whether this point
mass at $t=\mu_n$ is assigned to $m_+$ (if $\sigma_n=1$) or $m_-$ (if $\sigma_n=-1$). Other ways of splitting it are not allowed because that would
produce an eigenvalue at $\mu_n$, but $\sigma(H)\subseteq\overline{U}$ for $H\in\mathcal D_0(U)$.

If $\mu_n=a_n$ or $=b_n$, then
there is no such point mass and $\sigma_n$ becomes irrelevant. It is thus natural to combine $\mu_n,\sigma_n$
into one parameter $\widehat{\mu}_n=(\mu_n,\sigma_n)$ and
view $\widehat{\mu}_n$ as coming from a circle, which is obtained by gluing together two copies of the gap $[a_n,b_n]$ at the endpoints.

A more careful version of this analysis shows that the procedure sets up a homeomorphism between
$\mathcal D_0(U)$ and a torus $T^N$, by sending an $H\in\mathcal D_0(U)$
to its parameters $(\widehat{\mu}_1,\ldots , \widehat{\mu}_N)$.
\begin{proof}[Proof of Lemma \ref{L5.2}]
We can focus on the case $x\in\mathcal F$ since an automorphic measure with $\nu( \{ x\} )>0$ will also have point masses on the whole orbit $G\cdot x$.

For such an $x\in\mathcal F$, let $t=\varphi(x)$. Then $t\in\R_{\infty}\setminus U$, let's say $a_n\le t\le b_n$. If $t\not= a_n,b_n$, then introduce also
$\sigma =\operatorname{sgn} \Im \varphi(x+iy)$, $y>0$, $y$ small. So $\sigma=\pm 1$ informs us whether $\varphi(x+iy)$ approaches $t$ from above or below.

We now simply pick an $H\in\mathcal D_0(U)$ whose parameters $(\widehat{\mu}_1,\ldots , \widehat{\mu}_N)$ satisfy $\widehat{\mu}_n=(t,\sigma)$.
As discussed, the measure of $m_{\sigma}(z;H)$ will then have a point mass at
$t$ or, equivalently, $\lim_{y\to 0+} y\,\Im m_{\sigma}(t+iy)>0$. This means that also $\lim_{y\to 0+} y\,\Im M(t+i\sigma y)>0$ and this, in turn, implies
that $\lim_{y\to 0+} y\,\Im F(x+iy)>0$, as desired.

If $t=a_n,b_n$, the argument will not work in exactly this form because now the measures of $m_{\pm}$ have no point masses on $[a_n,b_n]$.
However, we can still take $\mu_n=t$, and this will make $m_{\pm}(z)\simeq 1/\sqrt{t-z}$ near $t$; compare \eqref{hfcn} above
and equation (2.8) from \cite{ForRem}.

Let's say $t=b_1$, to make this discussion more concrete, so that then $x$ is the (left) point with label $B_1$ in Figure 1.
In this situation, $m_{\pm}(z)$ have continuous extensions to $\C^+\cup (a_1,b_1)$ which are real on this interval (in fact, we can continue holomorphically,
but we won't need this here). Consider now a small semidisk
\[
D=\{ z: |z-x|<\delta, \Im z>0\} ,
\]
centered at $x=B_1$. The semicircle ending at $x$ from Figure 1 cuts this into two parts, and $F(\lambda)=m_+(\varphi(\lambda))$ on the left part while
$F(\lambda)=-\overline{m_-(\overline{\varphi(\lambda)})}$ on the right part, and of course $F$ is holomorphic on the full semidisk.
So what we just observed about the behavior of $m_{\pm}$ shows that $F$ similarly has a continuous extension to $\overline{D}\setminus\{ x\}$,
and $\Im F=0$ on $(x-\delta,x)$ and $(x,x+\delta)$. Hence $\nu$ gives zero weight to these sets.
If we had $\nu((x-\delta,x+\delta))=0$, then $F$ could in fact be extended continuously to this whole interval, but we already know that this isn't working
since $\lim_{y\to 0+} |F(x+iy)|=\infty$. Hence $\nu(\{ x\} )>0$, as desired.
\end{proof}
In this last part of the argument, it is really the same mechanism at work as in the easier case $t\not= a_n,b_n$, only in a more elaborate version:
$M$ has a square root type singularity, which does not lead to a point mass in the associated measure, but then $\varphi(x+h)-t\simeq h^2$,
so the change of variable that happens when we move from $M$ to $F$ amplifies this and we do end up with the required pole type behavior that
corresponds to a point mass.

In the next result,
we again write a matrix representing a general $g\in G$ as $g=\bigl( \begin{smallmatrix} a & b\\ c&d \end{smallmatrix}\bigr)$.
\begin{Lemma}
\label{L5.1}
We have
\begin{equation}
\label{5.1}
\sum_{g\not= 1} \left( \frac{1}{a^2} + \frac{1}{b^2} + \frac{1}{c^2} + \frac{1}{d^2} \right) < \infty .
\end{equation}
Moreover, $\sum_{g\in G} f(g;x)$ converges locally uniformly on $x\in L^c$.
\end{Lemma}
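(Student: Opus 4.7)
The plan is to derive both parts of the lemma from Lemma~\ref{L5.2}, supplemented by a geometric observation that forces the four entries of each $g \in G \setminus \{1\}$ to be comparable in magnitude.

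First I would prove pointwise convergence of $\sum_g f(g;x)$ at each $x_0 \in L^c$. Apply Lemma~\ref{L5.2} to obtain $F \in \mathcal{H}_G$ (the $F$ function of some $H \in \mathcal{D}_0(U)$) whose automorphic representing measure $\nu$ satisfies $\nu(\{x_0\}) > 0$. Lemma~\ref{L3.2} applied with $A = \{x_0\}$, combined with the automorphy $\nu_{g^{-1}} = \nu$, gives $\nu(\{g \cdot x_0\}) = f(g;x_0)\,\nu(\{x_0\})$ for every $g \in G$. Since non-identity elements of $G$ are hyperbolic with fixed points in $L$, the action of $G$ on $L^c$ is free, so the points $g\cdot x_0$ are pairwise distinct, and
\[
\nu(\{x_0\})\sum_{g\in G} f(g;x_0) \;=\; \sum_{g\in G}\nu(\{g\cdot x_0\}) \;\le\; \nu(\R_\infty) \;<\; \infty,
\]
proving $\sum_g f(g;x_0) < \infty$.

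To deduce \eqref{5.1}, specialize to $x_0 = 0$ and $x_0 = \infty$ (both lie in $\mathcal F \subset L^c$), which give $\sum_{g\ne 1} 1/(b^2+d^2) < \infty$ and $\sum_{g\ne 1} 1/(a^2+c^2) < \infty$. The decisive input is a comparability bound on the entries. For every $g \ne 1$ the four ratios
\[
\tfrac{a}{c} = g\cdot\infty, \qquad \tfrac{b}{d} = g\cdot 0, \qquad -\tfrac{d}{c} = g^{-1}\cdot\infty, \qquad -\tfrac{b}{a} = g^{-1}\cdot 0
\]
lie in $L^c \setminus \mathcal F$. Because the orbits $G\cdot 0 = \varphi^{-1}(a_1)$ and $G\cdot\infty = \varphi^{-1}(b_N)$ are discrete subsets of $L^c$ (their accumulation is confined to $L$), and because $0 \notin \varphi^{-1}(b_N)$ and $\infty \notin \varphi^{-1}(a_1)$, each of these four ratios is bounded both above and away from zero by constants depending only on the geometry of $\Omega$. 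Hence $|a|, |b|, |c|, |d|$ are mutually comparable, uniformly in $g \ne 1$, so
\[
\tfrac{1}{a^2} + \tfrac{1}{b^2} + \tfrac{1}{c^2} + \tfrac{1}{d^2} \;\le\; \tfrac{C}{a^2+c^2}
\]
for a constant $C$, and \eqref{5.1} follows on summing.

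For local uniform convergence, each $f(g;x)$ is continuous in $x$ and the partial sums are monotone increasing. Using the parametrization $\mathcal D_0(U) \cong T^N$ recalled just before Lemma~\ref{L5.2}, with the Dirichlet eigenvalue $\mu_n = \varphi(x)$ and the other parameters kept fixed, the map $x \mapsto (H_x, F_x, \nu_x)$ is continuous. Since $H_x \in \mathcal D_0(U)$ gives $\nu_x(\R_\infty) = \Im F_x(i) = 1$, the decomposition
\[
\nu_x(\{x\})\sum_g f(g;x) \;=\; \nu_x(G\cdot x) \;=\; 1 - \sum_{m \ne n} \nu_x(\varphi^{-1}(\mu_m))
\]
exhibits the pointwise limit as a continuous function on $L^c$, with $\nu_x(\{x\})$ continuous and bounded below on compact subsets (the branch points of $\varphi$ are handled as in the special endpoint case at the end of the proof of Lemma~\ref{L5.2}). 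Dini's theorem then upgrades pointwise to locally uniform convergence. The main obstacle is the entry-comparability step: without the geometric input that $0$ and $\infty$ are isolated preimages in $L^c$ of the distinct gap endpoints $a_1,b_N$, one cannot exclude hyperbolic $g \in G$ with very unbalanced entries, and \eqref{5.1} would then be strictly stronger than what the pointwise convergence at $x = 0, \infty$ provides.
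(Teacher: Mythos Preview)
Your argument for \eqref{5.1} is correct and matches the paper's almost exactly: the paper also extracts pointwise convergence at a single base point from the automorphic point mass supplied by Lemma~\ref{L5.2}, and also derives the entry comparability from the fact that $0,\infty\notin L$ (so the orbit points $g\cdot 0$, $g\cdot\infty$, $g^{-1}\cdot 0$, $g^{-1}\cdot\infty$ cannot accumulate at $0$ or $\infty$). The paper uses only the base point $0$ rather than both $0$ and $\infty$, but this is cosmetic.

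For local uniform convergence your Dini route is genuinely different, and it has a real gap. You need $x\mapsto \nu_x(\{x\})$ to be continuous (and bounded below), but weak-$*$ convergence of $\nu_x$ does not control a point mass at a \emph{moving} atom; the reference to the endpoint discussion in Lemma~\ref{L5.2} only gives positivity at a fixed $x$, not continuity in $x$. Your decomposition does yield continuity of $w_n(x)D(x)=1-\sum_{m\ne n}w_m(x)D(x_m)$ once you note that $G\cdot S_m$ is open in $L^c$ (so the fixed atoms $x_m$, $m\ne n$, can be isolated by a weak-$*$ test function), but this still leaves you with a continuous function divided by $D(x)$, which is only lower semicontinuous a priori---so you recover nothing new. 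To close the loop you would have to compute $\nu_x(\{x\})$ explicitly as the residue of $M$ at $\mu_n$ divided by $(1+x^2)\varphi'(x)$, using \eqref{hfcn}, and then resolve the $0/0$ at the branch points by hand. That is feasible but is considerably more work than you indicate.

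The paper avoids all of this: once the entry comparability and $\sum 1/a^2<\infty$ are in hand, it shows directly that $f(g;x)\lesssim 1/a^2$ uniformly on any compact $K\subset L^c$, after discarding the finitely many $g$ with $g^{-1}\cdot 0\in K$. This gives local uniform convergence by the Weierstrass $M$-test, with no further appeal to $\mathcal D_0(U)$.
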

The result in this form depends on our specific choices for the covering map (and thus the Fuchsian group), which make sure that $0,\infty\notin L$.
It fails badly for example for the cyclic group with generator $g\cdot z =2z$ since then $b=c=0$ for all $g\in G$ and, to add insult to injury,
$\{ a^2\} =\{ d^2\} = \{ 2^n: n\in\mathbb Z\}$.
\begin{proof}
We start by observing that $a\not= 0$ for all $g\in G$, because otherwise $g\cdot\infty=0$, which is impossible because the extended version of $\varphi$
satisfies $\varphi(\infty)\not= \varphi(0)$. For the same reason, $d\not= 0$ for all $g\in G$. Similarly, if we had $b=0$ or $c=0$
for a $g\not= 1$, then $z=0$ or $z=\infty$ would be a fixed point of $g$, but fixed points are in $L$; in fact, $L$ can also be obtained as the closure of
the fixed points \cite[Theorem 3.4.4]{Kat}.

We have uniform bounds on the quotients
\begin{equation}
\label{5.2}
0<C_1\le \left| \frac{a}{b} \right|, \left| \frac{a}{c} \right|,\left| \frac{b}{d} \right|,\left| \frac{c}{d} \right| \le C_2, \quad g\in G, g\not= 1 .
\end{equation}
Indeed, if we had, let's say, $a_n/b_n\to 0$ for certain $g_n\in G$, then $g^{-1}_n\cdot 0\to\infty$, which would imply that $\infty\in L$.
Similar arguments establish the other bounds.

We know from Lemma \ref{L5.2} that there is an automorphic measure with $\nu(\{ 0\})=1$. Now \eqref{3.3} shows that $\nu(\{ g\cdot 0\})=f(g;0)$,
and these points $g\cdot 0$, $g\in G$, are distinct because fixed points are in $L$ and $0\notin L$. Thus
$\sum f(g;0)<\infty$. We have $f(g;0)=1/(b^2+d^2)$, and then \eqref{5.2} implies that $\sum 1/X^2<\infty$ for $X=a,b,c,d$.

Next, I claim that $\sum_{g\in G} f(g;x)$ converges uniformly on
\[
|x|\ge\max \{ 2/C_1,1\} ,
\]
and here $C_1>0$ is the constant from \eqref{5.2}.
Indeed, for these $x$, we have $|b/a|\le |x|/2$, so $(ax+b)^2\ge a^2x^2/4$ and hence
\[
f(g;x) = \frac{x^2+1}{(ax+b)^2+(cx+d)^2} \le \frac{x^2+1}{(ax+b)^2} \le \frac{8}{a^2} .
\]
The uniform convergence of $\sum f(g;x)$ now follows from the convergence of $\sum 1/a^2$.

So, to establish uniform convergence on an arbitrary compact subset $K\subseteq L^c$, it now suffices to consider the case
$\infty\notin K$, so $K$ is a bounded subset of $\R$. Then obviously
\[
f(g;x) \le \frac{C}{(ax+b)^2}
\]
for $g\in G$, $x\in K$.

There can be only finitely many $g\in G$ with $-b/a\in K$ because $-b/a=g^{-1}\cdot 0$
and thus the existence of an infinite sequence of such $g$'s would give us a limit point in $K$.
We can then estimate $(ax+b)^2\gtrsim a^2$, uniformly in $x\in K$ and $g\in G\setminus F$,
with $F$ denoting this finite (possibly empty) set of $g\in G$ with $-b/a\in K$. Indeed, suppose that on the contrary we had
\[
\frac{(a_nx_n+b_n)^2}{a_n^2} = \left( x_n + \frac{b_n}{a_n}\right)^2\to 0
\]
for certain $g_n\in G\setminus F$, $x_n\in K$.
We can also assume here that $x_n\to x$, and it then follows that $x=-\lim b_n/a_n=\lim g^{-1}_n\cdot 0$. Since $x\notin L$, this would imply
that the $g_n$ come from a finite set, but then $-b_n/a_n=x\in K$ for large $n$ even though we specifically avoided the $g$ satisfying this condition.
So we again have a uniform estimate $f(g;x)\lesssim 1/a^2$ for $g\in G$, $x\in K$.
\end{proof}
\begin{Corollary}
\label{C5.1}
Let
\[
D(x) = \sum_{g\in G} f(g;x) .
\]
Then $D(x)$ is a continuous positive function on the open set $x\in L^c$, and $D(x)=\infty$ for all $x\in L$.
\end{Corollary}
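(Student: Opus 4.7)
Continuity and positivity on $L^c$ are immediate: each summand $f(g;x)=\|w(x)\|^2/\|gw(x)\|^2$ is a positive continuous function of $x\in\R_{\infty}$ (the denominator never vanishes, using $w(\infty)=e_1$), and Lemma \ref{L5.1} gives local uniform convergence of $\sum_g f(g;x)$ on $L^c$; hence $D$ is continuous there, and $D(x)\ge f(1;x)=1$.

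The substantive claim is $D(x)=\infty$ for $x\in L$. I would fix $x\in L$ and split according to its stabilizer in $G$. If some non-identity $g_0\in G$ fixes $x$, then $g_0$ is hyperbolic, so the eigenvalues of any lift to $\SL(2,\R)$ are $\lambda,\lambda^{-1}$ with $|\lambda|\ne 1$. Writing $g_0=\bigl(\begin{smallmatrix} a_0 & b_0 \\ c_0 & d_0 \end{smallmatrix}\bigr)$, a direct computation shows $g_0 w(x)=(c_0 x+d_0)w(x)$, so $w(x)$ is an eigenvector and $c_0 x+d_0\in\{\lambda,\lambda^{-1}\}$, in particular $|c_0 x+d_0|\ne 1$. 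Iterating gives $f(g_0^n;x)=(c_0 x+d_0)^{-2n}$, and the two-sided geometric sum $\sum_{n\in\Z}(c_0 x+d_0)^{-2n}$ diverges. Since the $g_0^n$ are distinct elements of $G$, already the cyclic subgroup $\langle g_0\rangle$ forces $D(x)=\infty$.

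If instead $x\in L$ has trivial stabilizer, then $G\cdot x$ is a set of distinct points and the plan is to assume $D(x)<\infty$ and derive a contradiction with Theorem \ref{T5.1}. Set
\[
\nu:=\sum_{g\in G} f(g;x)\,\delta_{g\cdot x},
\]
a finite positive Borel measure on $\R_{\infty}$ supported on $G\cdot x\subseteq L$, using $G$-invariance of $L$. I would verify $\nu\in\mathcal M_G$ directly, via the characterization $d(g^{-1}\nu)(t)=f(g;t)\,d\nu(t)$ noted right after Definition \ref{D5.1}: substituting $k=g^{-1}h$ and using the cocycle identity \eqref{cc},
\[
g^{-1}\nu=\sum_{h} f(h;x)\,\delta_{g^{-1}h\cdot x}=\sum_{k} f(gk;x)\,\delta_{k\cdot x}=\sum_{k} f(g;k\cdot x)f(k;x)\,\delta_{k\cdot x},
\]
which is exactly $f(g;\cdot)\,d\nu$. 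Then $\nu(L)\ge\nu(\{x\})=f(1;x)=1>0$ contradicts Theorem \ref{T5.1}, so $D(x)=\infty$.

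The main obstacle is conceptual: running the cocycle propagation of a point mass into a $G$-invariant measure \emph{before} Theorem \ref{T5.2} has been established, and noticing that the case split on the stabilizer is needed because the formal measure $\nu$ would otherwise carry an infinite mass at $x$ itself. Once this is arranged, the automorphy verification is a one-line application of \eqref{cc} and the contradiction with Theorem \ref{T5.1} is immediate.
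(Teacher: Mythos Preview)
Your proposal is correct and follows essentially the same route as the paper: continuity and the lower bound $D\ge 1$ on $L^c$ come straight from Lemma \ref{L5.1}, and for $x\in L$ the paper makes the identical case split on whether $x$ is a fixed point (hyperbolic eigenvalue argument on the cyclic subgroup) or not (build the automorphic point-mass measure and invoke Theorem \ref{T5.1}). Your write-up is in fact a bit more self-contained, since the paper defers the automorphy check to the proof of Theorem \ref{T5.2} while you verify $d(g^{-1}\nu)=f(g;\cdot)\,d\nu$ directly via the cocycle identity.
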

Since it is also clear from this that $D(x)$ gets large when $x\in L^c$ approaches a point of $L$, we can rephrase the statement as follows:
$D:\R_{\infty}\to [1,\infty]$ is continuous and $D^{-1}(\{ \infty\}) = L$.
\begin{proof}
Lemma \ref{L5.1} immediately implies that $D$ is continuous on $L^c$ and $1\le D<\infty$ there.

We will not need the second part here, about the behavior of $D$ on $L$, so I will just sketch the argument. First of all, if $x_0\in L$ is a fixed point of a
$g_0\in G$, $g_0\not= 1$, then $g_0w(x_0)=\lambda w(x_0)$ and thus $f(g^n_0;x_0)=|\lambda|^{-2n}$.
Hence $\sum_{n\in\mathbb Z} f(g^n_0;x_0)$ already diverges.

Suppose now that $x_0\in L$ is not a fixed point of any $g\in G$, $g\not= 1$, so that the points $g\cdot x_0$, $g\in G$, are all distinct.
If we had $D(x_0)<\infty$, then the measure
\[
\nu = \sum_{g\in G} f(g;x_0)\delta_{g\cdot x_0}
\]
would be finite. It is also automorphic, and this we can see in the same way as below, when we discuss this construction in general. See the proof of Theorem \ref{T5.2} below.
We have reached a contradiction because we already know from Theorem \ref{T5.1} that automorphic measures can not give weight to $L$.
\end{proof}
We now have all the tools needed to carry out the construction of automorphic measures that was already described above.
Given a finite Borel measure $\nu_0$ on $\mathcal F=\bigcup I_n$ and a Borel set $A\subseteq\mathcal F$, let
\begin{equation}
\label{5.3}
\nu(g\cdot A) = \int_A f(g;x)\, d\nu_0(x) .
\end{equation}
For fixed $g\in G$, this defines a measure on $g\cdot\mathcal F$. Now since $\mathcal F$ is a fundamental set for $L^c$,
the rule \eqref{5.3} also uniquely determines a Borel measure $\nu$ on all of $L^c$ (or on $\R_{\infty}$, with $\nu(L)=0$). More explicitly,
if $B\subseteq L^c$ is an arbitrary Borel set, then we write $B=\bigcup A_g$ as a countable disjoint union of the sets $A_g=B\cap g\cdot\mathcal F$,
and we set $\nu(B)=\sum \nu(A_g)$, with the summands defined via \eqref{5.3}. Corollary \ref{C5.1} shows that
\begin{equation}
\label{5.11}
\nu(L^c) = \int_{\mathcal F} D(x)\, d\nu_0(x) < \infty
\end{equation}
since $\mathcal F$ is contained in a compact subset of $L^c$, so $D$ is bounded there.
\begin{Theorem}
\label{T5.2}
For any finite Borel measure $\nu_0$ on $\mathcal F$, the measure $\nu$ defined above is automorphic.
\end{Theorem}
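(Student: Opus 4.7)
My plan is to reformulate automorphy as a family of integral identities and verify those by direct calculation on the tile sets $h\cdot B_0$ ($h\in G$, $B_0\subseteq\mathcal F$ Borel), with the cocycle relation \eqref{cc} doing the algebraic matching at the end.

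First, I will restate Lemma \ref{L3.2} in a more convenient form. Replacing $g$ by $g^{-1}$ in \eqref{3.3} and renaming gives, for any finite Borel measure on $\R_\infty$,
\[
\nu_g(B) = \int_{g\cdot B} f(g^{-1}; x)\, d\nu(x) .
\]
Thus $\nu\in\mathcal M_G$ is equivalent to the assertion that $\nu(B) = \int_{g\cdot B} f(g^{-1}; x)\, d\nu(x)$ for every Borel $B\subseteq\R_\infty$ and every $g\in G$. Because $\R_\infty = L \cup \bigsqcup_{h\in G} h\cdot\mathcal F$, with $\nu(L)=0$ by construction and $\nu_g(L)=0$ as well (the limit set is $G$-invariant, so the integral collapses to $\int_L f(g^{-1};\cdot)\, d\nu = 0$), it suffices to verify the identity for $B = h\cdot B_0$ with $B_0\subseteq\mathcal F$ Borel and $h\in G$.

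Next, I will compute both sides on such a $B$. The left side is immediate from \eqref{5.3}: $\nu(h\cdot B_0)=\int_{B_0} f(h;x)\, d\nu_0(x)$. For the right side, $g\cdot B = (gh)\cdot B_0$ lives in the tile $(gh)\cdot\mathcal F$, and \eqref{5.3} (extended from indicators to bounded Borel integrands in the usual way) identifies $\nu$ restricted to $(gh)\cdot\mathcal F$ with the push-forward of $f(gh;\cdot)\, d\nu_0$ under $gh$. The substitution rule \eqref{subst} then yields
\[
\int_{(gh)\cdot B_0} f(g^{-1}; y)\, d\nu(y) = \int_{B_0} f(g^{-1}; (gh)\cdot x)\, f(gh; x)\, d\nu_0(x) .
\]

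The two expressions coincide provided the integrands agree pointwise, i.e.\ $f(g^{-1}; (gh)\cdot x)\, f(gh; x) = f(h; x)$. This is precisely the cocycle identity \eqref{cc} applied to the factorisation $h = g^{-1}\cdot(gh)$, together with the trivial fact $f(1;\cdot)=1$. The only real subtlety I anticipate is the measure-theoretic bookkeeping in the change-of-variables step, but this is routine once \eqref{5.3} is read as the identity of measures $\nu|_{(gh)\cdot\mathcal F} = (gh)_*\bigl(f(gh;\cdot)\,\nu_0\bigr)$ and extended to bounded Borel integrands by the standard simple-function approximation.
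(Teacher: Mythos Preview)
Your proof is correct and follows essentially the same approach as the paper: both reduce to checking the identity on tile sets $h\cdot B_0$ with $B_0\subseteq\mathcal F$, unwind the defining relation \eqref{5.3} together with the substitution rule, and close with the cocycle identity \eqref{cc}. The only differences are cosmetic (you verify $\nu_g=\nu$ while the paper verifies $\nu_{g^{-1}}=\nu$, and you phrase the change of variables via push-forwards rather than image measures), and your appeal to $f(1;\cdot)=1$ is not actually needed since \eqref{cc} with the substitution $g\mapsto g^{-1}$, $h\mapsto gh$ already yields $f(h;x)=f(g^{-1};(gh)\cdot x)\,f(gh;x)$ directly.
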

\begin{proof}
We will verify that $\nu_{g^{-1}}=\nu$ for all $g\in G$. As above, given a Borel set $B\subseteq L^c$, we can write $B=\bigcup h\cdot C_h$, $C_h\subseteq\mathcal F$,
so it suffices to check that $\nu_{g^{-1}}(h\cdot A)=\nu(h\cdot A)$ for $A\subseteq\mathcal F$.

By Lemma \ref{L3.2} and the substitution rule \eqref{subst}, we have
\[
\nu_{g^{-1}}(h\cdot A) = \int_{g^{-1}h\cdot A} f(g;x)\, d\nu(x) =\int_A f(g;g^{-1}h\cdot t)\, d(h^{-1}g\nu)(t) .
\]
Also, $d(k^{-1}\nu)(t) = f(k;t)\, d\nu_0(t)$ on $\mathcal F$ by the defining property \eqref{5.3} of $\nu$. Thus the cocycle identity \eqref{cc} shows that
\begin{align*}
\nu_{g^{-1}}(h\cdot A) &= \int_A f(g;g^{-1}h\cdot t)f(g^{-1}h;t)\, d\nu_0(t) \\
& = \int_A f(h;t)\, d\nu_0(t) = \nu(h\cdot A)
\end{align*}
as desired.
\end{proof}
\section{Automorphic measures and their restrictions to $\mathcal F$}
Of course, \eqref{5.3} and the above procedure were forced on us by Lemma \ref{L3.2}, so $\nu$ is the only automorphic measure
whose restriction to $\mathcal F$ is $\nu_0$. We have set up a bijection between arbitrary finite measures on $\mathcal F$ and automorphic measures
on $L^c$ or $\mathbb R_{\infty}$. We want to go further here and show that this map is a homeomorphism if we use the weak $*$ topology on measures.
Here we give the subintervals
$I_n\subseteq\mathcal F$ the topology of a circle $S^1$. (If this seems unmotivated at this point, please see the discussion following the statement of
Theorem \ref{T6.1} below for an explanation of why this is important.)

This latter topology matters even if we are only interested in the measures because if $J=[C,D)$ is a half-open interval,
then for example $\delta_{D-1/n}\to\delta_C$ if $J$ is given the circle topology.

To constantly remind ourselves that this is the topology on $\mathcal F$ we are using, we now denote the subintervals of this
set by $S_n$. So $S_n=I_n$ as a set, and for example $S_1$ can be viewed as the segment between the two points with label $B_1$ in Figure 1 of Section 2,
with these points identified. To obtain $S_2$, we must first glue together the two pieces $[B_2,A_2)$ and $[A_2,B_2)$ of $I_2$ and then again
endow the resulting interval $[B_2,B_2)$ with the circle topology. Finally, in the situation of Figure 1, the last set $I_3$ is again already a single interval since
it goes through the point $\infty\in\R_{\infty}$, which we can't see in the picture. We glue together the two points labeled $A_3$ to produce $S_3$.

Recall that we denote the space of automorphic measures by $\mathcal M_G$. The almost self-explanatory notation $\mathcal M (\mathcal F)$
will similarly refer to the space of finite measures on $\mathcal F$. In both cases, we use the weak $*$ topology, and, as discussed, we give
$\mathcal F\cong S_1\sqcup \ldots \sqcup S_N$ the topology of a disjoint union of $N$ circles. We then also have an obvious (linear) homeomorphism
\[
\mathcal M(\mathcal F) \cong \mathcal M(S_1) \times \ldots\times\mathcal M(S_N) ,
\]
and we switch between these two realizations of the space without further comment when convenient.
\begin{Theorem}
\label{T6.1}
The restriction map $\mathcal M_G\to\mathcal M(\mathcal F)$, $\nu\mapsto (\nu_1,\ldots ,\nu_N)$, $\nu_n(A)=\nu(A)$, $A\subseteq S_n$, is a
homeomorphism.
\end{Theorem}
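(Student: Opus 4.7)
The bijectivity is already essentially established: Theorem \ref{T5.2} produces the automorphic extension $\nu$ of any finite measure $\nu_0$ on $\mathcal F$, and the combination of Theorem \ref{T5.1} (giving $\nu(L)=0$) with the defining relation \eqref{5.3} forces $\nu|_{\mathcal F}=\nu_0$, so the restriction inverts the extension. What remains is to verify continuity in both directions for the weak-$*$ topologies, with the circle topology used on each $S_n$.

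I would prove continuity of the extension map $\nu_0\mapsto\nu$ first. Given $\phi\in C(\R_{\infty})$, the disjoint decomposition $L^c=\bigsqcup_{g\in G}g\cdot\mathcal F$ and \eqref{5.3} yield
\[
\int\phi\,d\nu=\int_{\mathcal F}\phi^\#(x)\,d\nu_0(x),\qquad \phi^\#(x)=\sum_{g\in G}\phi(g\cdot x)f(g;x).
\]
By Lemma \ref{L5.1}, the series converges locally uniformly on $L^c$, so $\phi^\#$ is bounded and continuous on the compact set $\overline{\mathcal F}\subset L^c$ in the ambient $\R_{\infty}$-topology. The crucial step is to upgrade this to continuity on $\mathcal F$ in the circle topology on each $S_n$. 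At each identification pair $x_-,x_+$ of endpoints of $I_n$ --- which are $G$-related by a generator $g=IR$ with $g\cdot x_-=x_+$ --- the cocycle identity \eqref{cc}, applied to the reindexing $g'\mapsto g'g^{-1}$ in the series, gives $\phi^\#(x_+)=\phi^\#(x_-)/f(g;x_-)$. So the two values agree precisely when $f(g;x_-)=1$. This identity is verified by a short calculation: writing $I$ as inversion about $|z-c|=r$ and the matrix of $g=IR$ accordingly, one finds $gw(x_-)=\pm w(x_+)$ at the relevant boundary points, so the norms coincide and $f(g;x_-)=1$. Given this, weak-$*$ convergence $\nu_0^{(k)}\to\nu_0$ on $\mathcal F$ immediately yields $\nu^{(k)}\to\nu$ weak-$*$ on $\R_{\infty}$.

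For continuity of the restriction, I would use a soft compactness argument. Given $\nu^{(k)}\to\nu$ in $\mathcal M_G$, the total masses $\nu^{(k)}(\R_{\infty})$ are uniformly bounded, so the restrictions $\nu^{(k)}|_{\mathcal F}$ lie in a weak-$*$ precompact subset of $\mathcal M(\mathcal F)$. Any weak-$*$ subsequential limit $\mu$ of these restrictions re-extends, by Theorem \ref{T5.2} and the continuity of extension just proved, to the weak-$*$ limit of the corresponding $\nu^{(k)}$, namely $\nu$. Bijectivity then forces $\mu=\nu|_{\mathcal F}$, and a standard subsequence-of-subsequences argument upgrades this to convergence of the entire sequence.

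The technical heart of the proof --- and the main obstacle --- is the verification that $\phi^\#$ respects the circle topology on $\mathcal F$. This comes down to the identity $f(g;x)=1$ at the boundary identification points, which is the geometric content that the specific form of the universal cover in Definition \ref{D1.1} and the generators $g=IR$ are engineered to provide.
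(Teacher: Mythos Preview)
Your proof is correct and rests on the same geometric fact as the paper's---namely that $f(g;x)=1$ at the boundary identification points, which is Lemma~\ref{L6.2}---but you organize the argument in the opposite order. The paper proves continuity of the \emph{restriction} directly: given $\nu^{(k)}\to\nu$ in $\mathcal M_G$ and a test function $f\in C(S_1)$, it reduces to showing $\nu^{(k)}([c,d))\to\nu([c,d))$, and then uses Lemma~\ref{L6.2} to slide the endpoints of $[c,d)$ past any point masses while controlling the error via the automorphy relation. Continuity of the inverse then follows from the automatic-continuity argument for bijections between compact metric spaces. You instead prove continuity of the \emph{extension} directly, by packaging the integral against $\nu$ as an integral of the averaged function $\phi^\#$ against $\nu_0$, and verifying via the cocycle identity and $f(g_0;x_-)=1$ that $\phi^\#$ descends to a continuous function on the circles; the restriction map is then handled by your compactness/subsequence argument.

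Your route is arguably cleaner conceptually: the averaging operator $\phi\mapsto\phi^\#$ makes the role of Lemma~\ref{L6.2} completely transparent (it is exactly what is needed for $\phi^\#$ to be well defined on $S_n$), and you avoid the somewhat fiddly endpoint-sliding estimate. The paper's route, on the other hand, works directly with the map one actually cares about first and keeps the analysis closer to the measures themselves. Both arguments need the observation that $\overline{\mathcal F}\subset L^c$ so that the locally uniform convergence from Lemma~\ref{L5.1} applies, and both ultimately appeal to compactness to get the second direction for free.
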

We can now appreciate the significance of the circle topology in this context by running a quick informal check on this result. Observe first of all that the restriction map
is not continuous on general, not necessarily automorphic measures.
For example, we can again consider $\nu=\delta_x$, with $x$ converging from the left to (let's say) the left endpoint $B_1$ of $I_1$. Then all restrictions to $I_1=S_1$
are zero, but the sequence converges to $\delta_{B_1}$, which is equal to its restriction.

This kind of example would also be a problem for automorphic measures if we weren't using the circle topology. With the circle topology, all will be well
because such a point mass will lead to a corresponding point mass near the right endpoint of $I_1$, and this measure is now close to $\delta_{B_1}$.

The following observation will make sure that the weights come out right in this type of example.
\begin{Lemma}
\label{L6.2}
Consider one of large semicircles $|z+c|=r$ from Figure 1 (or, more formally, a suitable preimage under $\varphi$ of a subinterval $(b_j,a_{j+1})\subseteq U$).
Let $g\in G$ be the unique map that maps this semicircle to its reflected version $|z-c|=r$.
Then, for $x=-c\pm r$, we have $g\cdot x =-x$ and $f(g;x)=1$.
\end{Lemma}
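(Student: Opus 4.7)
The plan is to write $g$ explicitly as a M\"obius transformation and then verify both assertions by direct substitution. From Section~2, the relevant generator is $g=IR$, where $R(z)=-\ov{z}$ is reflection about the imaginary axis and $I(z)=c+r^2/\ov{z-c}$ is inversion about the right circle $|z-c|=r$. Composing (and using that $c$ is real) gives the holomorphic formula $g(z)=c-r^2/(z+c) = (cz+c^2-r^2)/(z+c)$, which after normalization to determinant one can be represented as the matrix
\[
g = \frac{1}{r}\begin{pmatrix} c & c^2-r^2 \\ 1 & c \end{pmatrix} \in \SL(2,\R).
\]

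With this explicit form the claim $g\cdot x=-x$ for $x=-c\pm r$ becomes a one-line computation: the denominator $x+c=\pm r$ is nonzero, and substitution gives $g\cdot x = c\mp r = -x$. This is also consistent with the description of $g$ as a combination of $R$ and $I$: on the left semicircle $|z+c|=r$, the map $R$ lands on the right semicircle $|z-c|=r$, and then $I$ fixes that circle pointwise, so $g$ acts there by $z\mapsto -\ov{z}$, which on real points is $x\mapsto -x$.

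For the cocycle value we compute $gw(x)$ as an honest vector in $\R^2$, not merely up to scalars:
\[
gw(x) = \frac{1}{r}\begin{pmatrix} c(x+c)-r^2 \\ x+c \end{pmatrix}.
\]
Substituting $x=-c+r$ (so $x+c=r$) yields $gw(x)=(c-r,1)^t = w(-x)$; substituting $x=-c-r$ (so $x+c=-r$) yields $gw(x)=-(c+r,1)^t = -w(-x)$. In either case $gw(x)=\pm w(-x)$, hence
\[
\|gw(x)\|^2 = \|w(-x)\|^2 = x^2+1 = \|w(x)\|^2,
\]
so $f(g;x) = \|w(x)\|^2/\|gw(x)\|^2 = 1$.

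There is no real obstacle; the calculation is routine once the matrix for $g$ is in hand. The only conceptual point worth flagging is the distinction between the projective action $g\cdot x=-x$, which a priori guarantees only that $gw(x)$ is \emph{some} scalar multiple of $w(-x)$, and the actual linear action on $w(x)$ that enters the cocycle. The content of Lemma~\ref{L6.2} is precisely that at the two endpoints of the semicircle this scalar has modulus one, a fact that the direct calculation above confirms.
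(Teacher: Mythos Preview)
Your proof is correct and follows exactly the same approach as the paper: derive the explicit matrix for $g=IR$ and then compute directly. The paper stops after writing down the matrix and says ``a straightforward calculation will now finish the proof,'' whereas you actually carry out that calculation; your additional remark distinguishing the projective action from the linear one on $w(x)$ is a helpful clarification but not a departure from the paper's method.
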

\begin{proof}
Recall from Section 2 that we have a description of $g=IR$ as the composition of reflection about the imaginary axis with an inversion
about $|z-c|=r$. More explicitly,
\[
g\cdot z = c -\frac{r^2}{z+c} = \frac{cz +c^2-r^2}{z+c} .
\]
So as the $\SL (2,\R)$ matrix representing $g$ we can take
\[
g = \frac{1}{r} \begin{pmatrix} c & c^2-r^2 \\ 1 & c \end{pmatrix} ,
\]
and given this information, a straightforward calculation will now finish the proof.
\end{proof}
\begin{proof}[Proof of Theorem \ref{T6.1}]
As discussed earlier, we already know that the map is a bijection. We must still establish that both the map and its inverse are continuous.

We start with the restriction map itself, and here
we'll discuss explicitly only the map $\nu\mapsto\nu_1$. The general case $\nu\mapsto\nu_n$ is of course similar but more awkward to write down when
$I_n$ has two pieces that need to be glued together.

Let's write $I_1=I=[c,d)$. As a preliminary, recall that the weak $*$ topology is metrizable on the bounded open subsets $\nu(\R_{\infty})<A$,
and it suffices to prove continuity on these sets.
We must thus show that if $\nu_n\to\nu$ in $\mathcal M_G$ and $f\in C[c,d]$, $f(c)=f(d)$, then
\begin{equation}
\label{6.1}
\int_{[c,d)} f(x)\, d\nu_n(x) \to \int_{[c,d)} f(x)\, d\nu(x) ,
\end{equation}
and here the hypothesis $\nu_n\to\nu$ means that similarly
\begin{equation}
\label{6.2}
\int_{\R_{\infty}}g(x)\, d\nu_n(x)\to \int_{\R_{\infty}} g(x)\, d\nu(x)
\end{equation}
for all $g\in C(\R_{\infty})$.

To do this, write $f(x)=f(c)+h(x)$. Then $h\in C[c,d]$ can be continuously extended to $\R_{\infty}$ by setting $h(x)=0$ for $x\notin I$, so
\eqref{6.1} for $f=h$ is essentially the same as our assumption \eqref{6.2}.

So we need only show that $\nu_n([c,d))\to\nu ([c,d))$. This is clear if $\nu(\{c\})=\nu(\{d\})=0$.

In general, we can move the endpoints $c,d$ slightly to avoid possible point masses. If we do this judiciously, we can exploit the fact that the measures
are automorphic to make sure that the measure doesn't change much when we adjust the interval in this way.

Here are the details.
Let $\epsilon>0$ be given. Let $g_0$ be the map from Lemma \ref{L6.2} for the circles ending
at $c$ and $d$, respectively. So $g_0\cdot c=d$ and $f(g_0;c)=1$. Now pick a small $\delta_1>0$ and define $\delta_2>0$ by $g_0\cdot (c-\delta_1)= d-\delta_2$.
Then $\delta_2$ will be small as well and in fact $\delta_2$ will also decrease strictly as $\delta_1>0$ approaches zero.
Thus, by taking $\delta_1$ small enough and avoiding the at most countably many point masses, we can make sure that the following statements will hold: 
\begin{gather}
\label{6.5}
1-\epsilon \le f(g_0;x)\le 1+\epsilon \;\textrm{ for }c-\delta_1\le x\le c , \\
\label{6.3}
\nu(\{ c-\delta_1\}) =\nu(\{ d-\delta_2\}) = 0 .
\end{gather}
Notice that $g_0\cdot [c-\delta_1,c) =[d-\delta_2,d)$, so \eqref{6.5}, combined with
Lemma \ref{L3.2}, shows that
\[
(1-\epsilon )\mu([c-\delta_1,c)) \le \mu([d-\delta_2,d))\le (1+\epsilon) \mu([c-\delta_1,c)) \textrm{ for all }\mu\in\mathcal M_G .
\]
In particular, we have
\begin{equation}
\label{6.6}
\left| \nu_n([c-\delta_1,c))-\nu_n([d-\delta_2,d))\right| \le \epsilon\nu_n(\R_{\infty})\le C\epsilon ,
\end{equation}
and of course the same estimate holds for $\nu$. Moreover, \eqref{6.3} implies that $\nu_n([c-\delta_1,d-\delta_2))\to \nu([c-\delta_1,d-\delta_2))$,
and if we combine this with \eqref{6.6}, we see that
\[
\left| \nu_n([c,d))-\nu([c,d))\right| < (2C+1)\epsilon
\]
for all sufficiently large $n$.

To prove that the inverse map is continuous, we again restrict the (original) map to the subsets $\{\nu\in\mathcal M_G: \nu(\R_{\infty})\le C\}$.
These are closed subsets of the compact metric spaces $\{\nu\in\mathcal M(\R_{\infty}): \nu(\R_{\infty})\le C\}$ and thus compact themselves.
To confirm this, assume that the $\nu_n$
are automorphic measures and $\nu_n\to\nu$. Then the associated Herglotz functions, normalized by requiring that $\Re F_n(i)=\Re F(i)=0$
converge similarly $F_n\to F$, and the functions $\Im F_n$ are automorphic by Proposition \ref{P5.1}(b). Clearly a pointwise
limit of automorphic functions is automorphic itself, so by referring to Proposition \ref{P5.1} for a second time, we see that
$\nu\in\mathcal M_G$, as claimed.

So we now have a continuous map between the compact metric spaces $\{\nu\in\mathcal M_G: \nu(\R_{\infty})\le C\}\to\{\nu\in\mathcal M(\mathcal F):
\nu(\mathcal F)\le C\}$, and it is a bijection onto its image. The continuity of the inverse is automatic in this situation. We can now finish the proof
by recalling \eqref{5.11} and the fact that the function $D(x)$ from that identity is bounded on $\mathcal F$. This makes sure that for any $A>0$,
the set $\{\nu\in\mathcal M(\mathcal F): \nu(\mathcal F)<A\}$ is in the image of the restricted maps discussed above if we choose $C>0$ large enough.
\end{proof}
\section{From automorphic measures to automorphic Herglotz functions}
Theorems \ref{T5.1}, \ref{T5.2}, and \ref{T6.1} give us a description of the automorphic measures, and we must now clarify which of these will lead
to automorphic functions when used in \eqref{hergl}.
Notice that Proposition \ref{P5.1}(d) associates a homomorphism $\gamma(g,\nu)$ with each $\nu\in\mathcal M_G$, which is given by
\begin{equation}
\label{7.1}
\gamma(g,\nu)=\Re (Fg(i)-F(i)) ,
\end{equation}
with $F=F_{\nu}$ being the Herglotz function with measure $F$, as in \eqref{hergl}. This function is strictly
speaking not completely determined by $\nu$, but of course the unknown constant $a$ will drop out of the difference and thus is irrelevant here.

We can alternatively view $\gamma$ as a map on $\mathcal M (S_1)\times \ldots \times\mathcal M(S_N)\cong
\mathcal M(\mathcal F)$ (for fixed $g\in G$), using Theorem \ref{T6.1}, and then we write it as $\gamma(g,\nu_1,\ldots ,\nu_N)$.
We see from \eqref{7.1} that $\gamma(g,\nu)$ is a continuous $\R$-linear
functional of $\nu$. It is also a continuous function of $\nu_1,\ldots ,\nu_N$, by Theorem \ref{T6.1}, but of course not multilinear in the individual measures
in this version. In the sequel, we will again switch without further comment between these viewpoints, interpreting $\gamma$ and also $\Gamma$
from \eqref{Gamma} below as a map on $\mathcal M_G$, $\mathcal M(\mathcal F)$, or $\mathcal M (S_1)\times\ldots\times\mathcal M (S_N)$.

The homomorphism $\gamma(g)$ for a general $g\in G$ (and fixed $\nu$, for now) is determined by its values on a set of generators $g_1,\ldots , g_{N-1}$.
Fixing such a set of generators, we can then define a single continuous linear map
\begin{equation}
\label{Gamma}
\Gamma: \mathcal M _G\to\R^{N-1} , \quad \Gamma (\nu)
= \left( \gamma(g_1,\nu),\ldots ,\gamma(g_{N-1},\nu)\right) .
\end{equation}
We are interested in the question of when $\Gamma(\nu)=0$ for an automorphic measure $\nu$.
\begin{Lemma}
\label{L7.1}
Suppose that $\nu\in\mathcal M_G$, $\Gamma(\nu)=0$. If $\nu(S_n)=0$ for some $n=1,2,\ldots , N$, then $\nu=0$.
\end{Lemma}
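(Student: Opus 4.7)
By Proposition \ref{P5.1}(d), the hypothesis $\Gamma(\nu)=0$ upgrades the automorphism of $\nu$ to a genuine automorphism $Fg=F$ of the Herglotz function itself, so $F\in\mathcal H_G$. As in the proof of Theorem \ref{T1.1}(a), the formula $M(z)=F(\varphi^{-1}(z))$ then defines a holomorphic function on $\Omega=\C^+\cup U\cup\C^-$. Since $M=m_+$ on $\C^+$ and $M(z)=-\overline{m_-(\bar z)}$ on $\C^-$, we have $\Im M\ge 0$ throughout $\Omega$. The plan is to extend $M$ holomorphically across the $n$th gap $(a_n,b_n)$ using the vanishing assumption, and then invoke the minimum principle to force $M$ to be constant.

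For the extension, note first that by Lemma \ref{L3.2} combined with the automorphy of $\nu$, we have $\nu(g\cdot S_n)=0$ for every $g\in G$, so the representing measure of $F$ places no mass on any of the real intervals $g\cdot\mathring S_n$. Standard Schwarz reflection therefore extends $F$ holomorphically across each such interval, with real boundary values there. Since $\varphi$ itself extends holomorphically across the interior of $L^c$ with $\varphi'\ne 0$ (see the end of Section 2), the composition $M=F\circ\varphi^{-1}$ extends to a holomorphic function on
\[
\Omega':=\Omega\cup(a_n,b_n)=\C_\infty\setminus\Bigl(\{a_n,b_n\}\cup\bigcup_{j\ne n}[a_j,b_j]\Bigr),
\]
taking real values on $(a_n,b_n)$. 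Here the genuine automorphism $Fg=F$---not merely $\nu\in\mathcal M_G$---is what does the work: it ensures that different preimages $p,p'\in L^c$ of the same $x\in(a_n,b_n)$ produce $F(p)=F(p')$, so that $M(x)$ is unambiguously defined by these local Schwarz extensions.

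Once $M$ has been extended, the conclusion is immediate. The domain $\Omega'$ is open and connected, $\Im M\ge 0$ is harmonic on $\Omega'$, and $\Im M\equiv 0$ on the interval $(a_n,b_n)$, which consists of interior points of $\Omega'$. The strong minimum principle forces $\Im M\equiv 0$ on $\Omega'$, so $M$ is real-valued holomorphic on the connected set $\Omega'$, hence a real constant. Then $F=M\circ\varphi$ is a real constant on $\C^+$, and a constant Herglotz function has vanishing representing measure, so $\nu=0$.

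The main obstacle is the single-valuedness of the extended $M$, which is exactly what forces us to use the stronger hypothesis $\Gamma(\nu)=0$ rather than only $\nu\in\mathcal M_G$. Everything else is a routine combination of Schwarz reflection and the minimum principle.
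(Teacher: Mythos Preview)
Your argument has a genuine gap at the extension step. You claim that the automorphism $Fg=F$ guarantees $F(p)=F(p')$ for any two preimages $p,p'\in L^c$ of the same $x\in(a_n,b_n)$. This is false: the extended map $\varphi:\C^+\cup L^c\cup\C^-\to\C_\infty$ is \emph{not} a covering map over the gaps, so preimages of a single point need not lie in one $G$-orbit. Concretely, each $x\in(a_n,b_n)$ has exactly two preimages $p\ne p'$ in $I_n\subseteq\mathcal F$, one reached when $z\to x$ from $\C^+$ and the other from $\C^-$ (this is precisely the $\sigma=\pm 1$ dichotomy in the parametrization of $\mathcal D_0(U)$). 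Since $\mathcal F$ is a fundamental set for the $G$-action on $L^c$, these two distinct points of $\mathcal F$ lie in \emph{different} orbits, and $Fg=F$ says nothing about the relation between $F(p)$ and $F(p')$.

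In terms of $M$ directly, the one-sided boundary values at $x\in(a_n,b_n)$ are $M(x^+)=m_+(x)$ and $M(x^-)=-\overline{m_-(x)}=-m_-(x)$ (both real, by your Schwarz reflection argument applied to $F$). These coincide precisely when $h(x)=m_+(x)+m_-(x)=0$; asking this for all $x\in(a_n,b_n)$ forces $h\equiv 0$ by analytic continuation, which is already the degenerate conclusion you are after. So the extension you assert is essentially equivalent to what you are trying to prove. The paper sidesteps this circularity by not attempting to glue $M$ across the gap: it works with $m_\pm$ separately, deduces that their representing measures give no mass to $[a_n,b_n]$ so that $h$ is real and increasing there, and then uses the Krein function of $h$ (which equals $1/2$ on $U$ and is $\{0,1\}$-valued on $(a_n,b_n)$) to force a square-root singularity $h(z)\simeq(t-z)^{-1/2}$ at $t=a_n$ or $t=b_n$, contradicting the continuity of $m_\pm$ there.
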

Put differently, if the representing measure of an automorphic Herglotz function gives zero weight to an $I_n$, then $F\equiv a\in\R_{\infty}$.
\begin{proof}
Let's again suppose that $n=1$, for convenience.
Observe that the intervals adjacent to $I_1$ in Figure 1 are images $g\cdot I_1$ of the same interval. So if
$\nu(I_1)=0$, then in fact $\nu(J)=0$ for a larger open interval $J$ which contains $I_1$ and the neighboring intervals, which are also
mapped to $(a_1,b_1)$ by $\varphi$. The associated automorphic Herglotz function $F(\lambda)=\int_{\R_{\infty}}
\frac{1+t\lambda}{t-\lambda}\, d\nu(t)$ thus has a holomorphic continuation to $\C^+\cup J\cup\C^-$, and $\Im F(x)=0$ for $x\in J$.

We then deduce that the functions $m_{\pm}$ have similar behavior
near the gap $(a_1,b_1)$. Of course, we can not automatically conclude that they are holomorphic at the branch points $a_1,b_1$ of $\varphi$;
rather, these functions will have square root type behavior there. However, it is true and easy to see that $m_{\pm}$ have continuous extensions to
$\C^+\cup (a_1-\delta,b_1+\delta)$ and $m_{\pm}(x)\in\R$ for $x\in (a_1,b_1)$. This is impossible, basically because it contradicts \eqref{hfcn}.

Of course, \eqref{hfcn} as stated does not apply here because we did not assume that $H\in\mathcal D_0(U)$.
However, the reasoning that led to \eqref{hfcn} remains valid and gives the same behavior of $h$ locally. The point is that no matter what the value of $\mu_n$ is,
we cannot avoid $h$ being unbounded near some point of $[a_n,b_n]$. This would be obvious from \eqref{hfcn} if we had this formula available,
and it is still true in our much more general situation. Let me provide a somewhat more detailed sketch of this step.

First of all, observe that the Krein function $\xi$ of $h(z)=m_+(z)+m_-(z)$
does satisfy $\xi=1/2$ on $U$ if $h$ is not a constant from $\R_{\infty}$.
Moreover, the properties of the functions $m_{\pm}$ that were just derived imply that their measures and thus also the one
associated with $h$ give zero weight to $[a_1,b_1]$. In this situation, the Herglotz representation of $h$ shows that
\[
h'(x) = \int_{\R_{\infty}\setminus [a_1,b_1]} \frac{t^2+1}{(t-x)^2}\, d\mu(t) > 0
\]
on $x\in (a_1,b_1)$. This in turn implies that $\xi(x)=\chi_{(a_1,c)}(x)$ there, for some $a_1\le c\le b_1$. No matter what the value of $c$ is,
we have $\xi(x)=1$ near $a_1$ or $\xi(x)=0$ near $b_1$ (or both) for $x\in (a_1,b_1)$ and of course $\xi=1/2$ on the other side
of this point $t=a_1$ or $t=b_1$. In either case, this behavior of $\xi$ gives us a square root type singularity $h(z)\simeq (t-z)^{-1/2}$ there.
This contradicts what we learned about $m_{\pm}$ above. We have to admit that we are actually in the degenerate case $h\equiv a\in\R_{\infty}$,
and thus $\nu(\R_{\infty})=0$, as claimed.
\end{proof}
\begin{Corollary}
\label{C7.1}
Suppose that $\nu\in\mathcal M_G$, $\nu\not=0$, $\Gamma(\nu_1,\ldots, \nu_N)=0$. Then, if also $\Gamma(c_1\nu_1,\ldots ,c_N\nu_N)=0$, $c_n\ge 0$,
then $c_1=c_2=\ldots =c_N$.
\end{Corollary}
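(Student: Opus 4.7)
The plan is to exploit the $\mathbb{R}$-linearity of $\Gamma$ together with Theorem~\ref{T6.1} and Lemma~\ref{L7.1}. After reindexing so that $c_1 = \min_n c_n$, I would invoke Theorem~\ref{T6.1} to produce the unique automorphic measure $\mu \in \mathcal{M}_G$ whose restriction to $\mathcal{F}$ is $(c_1\nu_1, \ldots, c_N\nu_N)$; by hypothesis $\Gamma(\mu) = 0$.

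Next, consider the automorphic signed measure $\eta = \mu - c_1\nu$. Its restriction to $S_n$ is $(c_n - c_1)\nu_n \ge 0$ by the choice of $c_1$. The crucial claim is that $\eta$ is in fact a positive measure, and therefore lies in $\mathcal{M}_G$. This follows from the transformation rule: Lemma~\ref{L3.2} extends to signed measures by linearity, and applied to $\eta$ it yields $\eta(g \cdot A) = \int_A f(g;x)\, d\eta(x)$ for $A \subseteq \mathcal{F}$. Since $f(g;\cdot) > 0$ and $\eta|_{\mathcal{F}} \ge 0$, $\eta$ is nonnegative on every translate $g \cdot \mathcal{F}$, and hence on all of $L^c = \bigcup_{g \in G} g \cdot \mathcal{F}$. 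Combined with $\eta(L) = \mu(L) - c_1\nu(L) = 0$, which holds by Theorem~\ref{T5.1}, this gives $\eta \in \mathcal{M}_G$.

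With $\eta$ identified as a positive automorphic measure, linearity of $\Gamma$ gives $\Gamma(\eta) = 0$, and by construction $\eta(S_1) = 0$. Lemma~\ref{L7.1} then forces $\eta \equiv 0$, so $\mu = c_1\nu$, and restricting to $S_n$ yields $(c_n - c_1)\nu_n = 0$ for each $n$. Finally, since $\nu \neq 0$ and $\Gamma(\nu) = 0$, the contrapositive of Lemma~\ref{L7.1} applied to $\nu$ itself guarantees $\nu_n \neq 0$ for every $n$; hence $c_n = c_1$ for all $n$, as desired.

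The main obstacle I anticipate is verifying that $\eta = \mu - c_1\nu$ is actually a positive measure, since Lemma~\ref{L7.1} is stated for elements of $\mathcal{M}_G$ and the naive difference of two positive measures is only a signed measure. Everything else reduces to bookkeeping with the linear structure of $\Gamma$ and the bijection of Theorem~\ref{T6.1}.
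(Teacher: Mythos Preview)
Your argument is correct and essentially identical to the paper's: the paper takes $c_1=\max_n c_n$ and considers $\rho=c_1\nu-\mu$, while you take $c_1=\min_n c_n$ and consider $\eta=\mu-c_1\nu$, which is the same trick up to a sign. Your extra care in verifying that $\eta$ is a positive measure (via the transformation rule and the decomposition $L^c=\bigcup_g g\cdot\mathcal F$) just spells out what the paper compresses into the single remark that $\rho$ ``is still positive since its restriction to $S_n$ is $(c_1-c_n)\nu_n$''; the underlying reason is the same linearity of the extension \eqref{5.3}.
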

\begin{proof}
Denote the second measure, with restrictions $c_n\nu_n$, by $\mu\in\mathcal M_G$.
Let's say $c_1=\max c_n$. Consider $\rho=c_1\nu-\mu$. This measure is still positive since its restriction to $S_n$ is $\rho_n=(c_1-c_n)\nu_n$, and obviously it is automorphic
and $\Gamma(\rho)=0$. However, $\rho_1=0$, so Lemma \ref{L7.1} shows that $\rho=0$. Since also $\nu_n\not= 0$ for all $n$, by the same result, this implies
that $c_n=c_1$.
\end{proof}
We now turn to existence of solutions of $\Gamma (\nu)=0$.
\begin{Theorem}
\label{T7.1}
For any $\nu_n\in\mathcal M (S_n)$, $\nu_n(S_n)>0$, $n=1,2,\ldots , N$, there are unique constants $c_n>0$, $\sum c_n=1$,
such that
\[
\Gamma(c_1\nu_1,\ldots ,c_N\nu_N)=0 .
\]
\end{Theorem}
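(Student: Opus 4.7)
Uniqueness is a direct consequence of Corollary \ref{C7.1}: if $(c_n)$ and $(c_n')$ are two valid normalizations, the automorphic measure $\mu$ extending $(c_1\nu_1, \ldots, c_N\nu_N)$ (via Theorem \ref{T5.2}) is nonzero with $\Gamma(\mu)=0$, so Corollary \ref{C7.1}, applied with rescaling factors $c_n'/c_n$, forces these factors to be independent of $n$; combined with $\sum c_n = \sum c_n' = 1$ this gives $c_n = c_n'$. By the $\R$-linearity of $\Gamma$ and rescaling, existence reduces to the case where each $\nu_n$ is a probability measure on $S_n$. Setting $v_n(\nu_n) := \Gamma(\widetilde\nu_n) \in \R^{N-1}$, where $\widetilde\nu_n$ is the automorphic extension of $(0, \ldots, \nu_n, \ldots, 0)$, the task becomes finding $c \in \Delta := \{c \in \R^N : c_n > 0, \sum c_n = 1\}$ with $\sum_n c_n v_n(\nu_n) = 0$.

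My strategy for existence is a clopen argument in the connected space $\prod_n \mathcal M_1(S_n)$, each factor being convex. Let $A \subseteq \prod_n \mathcal M_1(S_n)$ denote the set of tuples for which such a $c$ exists; it suffices to show $A$ is nonempty, open, and closed. Nonemptiness comes from $\mathcal D_0(U)$: pick $x_n \in S_n$ whose image $\varphi(x_n)$ lies in the interior of $(a_n, b_n)$, and take the $H \in \mathcal D_0(U)$ with parameters $\widehat\mu_n = (\varphi(x_n), \sigma_n)$, choosing $\sigma_n$ so that the preimage of the point mass of $m_{\sigma_n}$ at $\varphi(x_n)$ lands on $x_n$. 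The formula \eqref{hfcn} and the analysis preceding Lemma \ref{L5.2} show that the $F$ function of $H$ is automorphic with measure restricting to $\mathcal F$ as $\sum_n c_n \delta_{x_n}$, all $c_n > 0$, so after normalizing total mass, $(\delta_{x_1}, \ldots, \delta_{x_N}) \in A$. Closedness: if $\nu^{(k)} \to \nu$ with witnesses $c^{(k)} \in \Delta$, pass to a subsequence so that $c^{(k)} \to c \in \overline{\Delta}$; joint continuity gives $\sum_n c_n v_n(\nu_n) = 0$, and if some $c_{n_0} = 0$, the automorphic extension of $(c_n\nu_n)$ has zero restriction to $S_{n_0}$ and $\Gamma = 0$, so Lemma \ref{L7.1} makes it vanish, forcing all $c_m = 0$ (since $\nu_m(S_m) = 1$), contradicting $\sum c_n = 1$. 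Hence $c \in \Delta$.

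The main obstacle is openness, where the uniqueness in Corollary \ref{C7.1} must be converted into an invertibility statement. Fix $(\nu^{(0)}, c^{(0)})$ satisfying the equations, and let $M(\nu)$ be the $N \times N$ matrix whose first $N-1$ rows are the coordinates of $v_1(\nu_1), \ldots, v_N(\nu_N)$ and whose last row is $(1, \ldots, 1)$, so the system reads $M(\nu)c = e_N := (0, \ldots, 0, 1)^t$. To see $M(\nu^{(0)})$ is invertible, suppose $M(\nu^{(0)}) x = 0$; then $\sum x_n = 0$ and $\sum x_n v_n(\nu_n^{(0)}) = 0$, so for small $\varepsilon$ the perturbation $c^{(0)} + \varepsilon x$ still lies in $\Delta$ and the automorphic extension of $((c_n^{(0)} + \varepsilon x_n)\nu_n^{(0)})$ is nonzero with $\Gamma = 0$. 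Applying Corollary \ref{C7.1} with reference measure the automorphic extension of $(c_n^{(0)}\nu_n^{(0)})$ forces $c_n^{(0)} + \varepsilon x_n = \alpha c_n^{(0)}$ for all $n$ and some $\alpha > 0$; summing and using $\sum x_n = 0$, $\sum c_n^{(0)} = 1$ yields $\alpha = 1$ and $x = 0$. Invertibility persists on a neighborhood of $\nu^{(0)}$, and $c(\nu) := M(\nu)^{-1} e_N$ gives a continuous function of $\nu$ that stays in $\Delta$ near $\nu^{(0)}$, proving openness and completing the proof.
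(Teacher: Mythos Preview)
Your proof is correct, but the strategy differs from the paper's. The paper proceeds constructively: it first handles the case where every $\nu_n=\delta_{x_n}$ is a single point mass (as you do for nonemptiness), then uses linearity of $\Gamma$ to build up to finitely supported measures one component at a time, and finally passes to general $\nu_n$ by weak~$*$ approximation, invoking continuity of $\Gamma$ and Lemma~\ref{L7.1} to keep the limiting $c_n$ strictly positive. Your route is more topological: a clopen argument in the connected space $\prod_n\mathcal M_1(S_n)$, with closedness handled exactly as in the paper's approximation step, and openness obtained by converting the uniqueness in Corollary~\ref{C7.1} into invertibility of the $N\times N$ matrix $M(\nu)$ (whose $n$th column is $v_n(\nu_n)$ stacked on a $1$; your wording about ``rows'' is slightly off but the intended system $M(\nu)c=e_N$ is unambiguous). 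The paper's argument is a bit more elementary and entirely constructive, while yours yields the additional structural information that $c(\nu)=M(\nu)^{-1}e_N$ depends continuously on $(\nu_1,\ldots,\nu_N)$ via an explicit local formula, which is a pleasant bonus even if not needed here.
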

\begin{proof}
Uniqueness is guaranteed by Corollary \ref{C7.1}, so we only need to prove the existence of such $c_n$.
Of course, we need not pay any attention to the condition $\sum c_n=1$, which we can always satisfy by multiplying the $c_n$ by a constant.

We start with the case when $\nu_n=\delta_{x_n}$, $x_n\in S_n$. We mostly did this already, in the proof of Lemma \ref{L5.2}, though we focused on a single $x_n$
there. Let's briefly review the argument one more time: we take the (unique) $H\in\mathcal D_0(U)$ with parameters $\widehat{\mu}_n=(t_n,\sigma_n)$, $t_n=\varphi(x_n)$,
with $\sigma_n=\pm 1$ determined by whether $\varphi(x_n+iy)\in\Omega$ approaches $t_n\in (a_n,b_n)$ from above or below when $y\to 0+$. Then the measure
$\nu$ of the $F$ function of $H$ will satisfy $\Gamma(\nu)=0$ because $F$ is automorphic, and it
will have the desired point masses at the $x_n$. This was fairly obvious when $t_n\in (a_n,b_n)$, and, as we argued above, it is still true,
though less obvious, when $t_n=a_n$ or $=b_n$. Finally, the parametrization of $\mathcal D_0(U)$ that was reviewed following the
statement of Lemma \ref{L5.2} also makes it clear that $\nu(I_n\setminus\{ x_n\})=0$. This follows because if, say, $t_n\in (a_n,b_n)$ and $\sigma_n=1$,
then $m_-$ can be holomorphically continued through $(a_n,b_n)$ while $m_+$ is meromorphic there, with a single pole at $t_n$, and both functions
are real and continuous on $[a_n,b_n]\setminus \{ t_n\}$. The other cases lead to similar scenarios.

Next, suppose that, more generally, $\nu_1=\sum_{j=1}^J w_j \delta_{x_j}$, but still $\nu_n=\delta_{y_n}$ for $n\ge 2$, with $x_j\in S_1$, $y_n\in S_n$, $w_j>0$.
If we replace $\nu_1$ by just one of its point masses $w_j\delta_{x_j}$, then we are back in the first case, so we know there are
$d_2(j),\ldots , d_N(j)>0$ such that
\[
\Gamma(w_j\delta_{x_j},d_2(j)\delta_{y_2}, \ldots , d_N(j)\delta_{y_N}) = 0 .
\]
Thus $c_1=1$, $c_n=\sum_{j=1}^J d_n(j)$ works for the original measures.

Now we can repeat the same procedure in the second component, so we consider measures of the form
\begin{equation}
\label{7.31}
( \nu_1, \sum_{k=1}^K w_k \delta_{x_k} , \delta_{y_3}, \ldots , \delta_{y_N} ) ,
\end{equation}
where $\nu_1=\sum_{j=1}^J v_j \delta_{t_j}$ is of the type just discussed. By the previous step, we can now handle the measures where
we replace the second component by one of its summands $w_k\delta_{x_k}$ and then also the full measure from \eqref{7.31}, in the same way as above.
Continuing in this way, we establish the claim of Theorem \ref{T7.1} in the case
when all $\nu_n$ are finitely supported measures.

In general, if arbitrary measures $\nu_n$ are given, approximate them by such finitely supported measures $\nu^{(j)}_n\to\nu_n$. By what we just showed,
there are $c_n(j)> 0$, $\sum_{n=1}^N c_n(j)=1$, such that
\[
\Gamma \left( c_1(j)\nu^{(j)}_1,\ldots ,c_N(j)\nu^{(j)}_N\right) =0 .
\]
By passing to a subsequence,
we may assume that $c_n=\lim_{j\to\infty} c_n(j)$ exists. We have $c_n\ge 0$, $\sum c_n=1$. We observed at the beginning of this section
that $\Gamma(\nu)$ is continuous. Thus
\[
\Gamma(c_1\nu_1,\ldots ,c_N\nu_N)=0 ,
\]
as desired. Since $\sum c_n=1$, and, by assumption, $\nu_n(S_n)>0$, Lemma \ref{L7.1} guarantees that $c_n>0$ for all $n$.
\end{proof}
\section{Proof of Theorem \ref{T1.2}}
We'll prove the following slightly more detailed version of Theorem \ref{T1.2}.
\begin{Theorem}
\label{T8.1}
(a) $\{ F\in\mathcal H_G: F(i)=i\}$ is a compact convex set.

(b) Let $F=F_{\nu}=F(\cdot; H)$ be an automorphic Herglotz function from the set of part (a) with associated measure $\nu\in\mathcal M_G$.
Let $H\in\mathcal D(U)$ be the associated canonical system, that is, $F$ is the $F$ function of $H$.

Then the following are equivalent: (i) $F$ is an extreme point;
(ii) $H\in\mathcal D_0(U)$; (iii) The restrictions $\nu_n=\chi_{S_n}\nu$ are of the form $\nu_n=w_n\delta_{x_n}$, with $x_n\in I_n$, $w_n>0$.
\end{Theorem}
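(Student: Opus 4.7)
Part (a) is immediate: $K := \{F \in \mathcal{H}_G : F(i) = i\}$ is the intersection of the compact set $\mathcal{H}_G$ with a closed condition, hence compact; and convex combinations of automorphic Herglotz functions satisfying the affine condition $F(i) = i$ stay in $K$, since $\mathcal{H}_G$ is a convex subset of $\mathcal{H}$.

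Turning to part (b), I plan to show (ii) $\Leftrightarrow$ (iii) and then close the triangle by (iii) $\Rightarrow$ (i) $\Rightarrow$ (iii). The equivalence (ii) $\Leftrightarrow$ (iii) is essentially built from material already developed. For (iii) $\Rightarrow$ (ii), the first step of the proof of Theorem \ref{T7.1} associates to any point-mass tuple $(w_n \delta_{x_n})$ a specific $H \in \mathcal{D}_0(U)$ with parameters $\widehat{\mu}_n$ determined by $\varphi(x_n)$ together with the sign of $\Im \varphi$ on the appropriate side of $x_n$. For (ii) $\Rightarrow$ (iii), if $H \in \mathcal{D}_0(U)$ then $h = m_+ + m_-$ has the product form \eqref{hfcn}, so $m_\pm$ are meromorphic on each gap $(a_n, b_n)$ with at most a single pole at $\mu_n$, and the branch-point analysis at the end of the proof of Lemma \ref{L5.2} handles $\mu_n \in \{a_n, b_n\}$; pulling back by $\varphi$ then shows that $\nu$ has exactly one point mass on each $S_n$.

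For (iii) $\Rightarrow$ (i), suppose $\nu_n = w_n \delta_{x_n}$ for each $n$ and $F = cF^{(1)} + (1-c)F^{(2)}$ with $F^{(j)} \in K$ and $c \in (0,1)$. Because $F(i) = i$ throughout $K$, the Herglotz constant $a = \Re F(i)$ vanishes and $\nu(\R_{\infty}) = 1$, so at the level of measures the convex decomposition becomes the affine identity $\nu = c\nu^{(1)} + (1-c)\nu^{(2)}$. Restricting to $S_n$ forces each $\nu^{(j)}_n$ to be supported at $x_n$, hence $\nu^{(j)}_n = (w^{(j)}_n/w_n)\nu_n$. Corollary \ref{C7.1}, applied to the automorphic $\nu^{(j)}$ with $\Gamma(\nu^{(j)}) = 0$, forces the ratios $w^{(j)}_n/w_n$ to be independent of $n$, so $\nu^{(j)} = \lambda_j \nu$; normalization $\nu^{(j)}(\R_{\infty}) = 1$ gives $\lambda_j = 1$, so $\nu^{(1)} = \nu^{(2)} = \nu$ and $F$ is extreme.

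For (i) $\Rightarrow$ (iii) I argue the contrapositive. If some $\nu_{n_0} = 0$, Lemma \ref{L7.1} forces $\nu = 0$, hence $F$ is a real constant, contradicting $F(i) = i$. So assume some $\nu_{n_0}$ is nonzero but not a Dirac mass, and construct a nonzero signed automorphic measure $\sigma$ with $\Gamma(\sigma) = 0$, $\sigma(\R_{\infty}) = 0$, and $|\sigma_n| \le \nu_n$, for then $\nu^{\pm} = \nu \pm \sigma$ are distinct probability measures in $\mathcal{M}_G$ satisfying $\Gamma = 0$ and yield the nontrivial decomposition $F = \tfrac{1}{2}(F^+ + F^-)$ in $K$. (Here I implicitly use the signed-measure extension of Theorems \ref{T5.2} and \ref{T6.1}, which is immediate since the relevant constructions are linear in the restrictions.) To build $\sigma$, set $\sigma_{n_0} = \epsilon \tau$ for some $\tau \in L^{\infty}(\nu_{n_0})$ to be chosen, and $\sigma_n = k_n \nu_n$ for $n \ne n_0$. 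The $N-1$ conditions $\Gamma(\sigma) = 0$ determine $(k_n)_{n \ne n_0}$ uniquely as a linear function of $\tau$: the kernel of $\Gamma$ on the $N$-dimensional subspace $\{(k_n \nu_n)\}$ is one-dimensional and spanned by the all-positive vector $(1,1,\ldots,1)$ (equivalently the vector from Theorem \ref{T7.1}), so its intersection with the hyperplane $\{k_{n_0} = 0\}$ is trivial. The remaining condition $\sigma(\R_{\infty}) = 0$ then reads as a single linear functional $\ell(\tau) = 0$ on $L^{\infty}(\nu_{n_0})$, and $\ell(\nu_{n_0}) = 1$ because $\tau = \nu_{n_0}$ forces $\sigma = \nu$. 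Since $\nu_{n_0}$ is not a Dirac mass, $\dim L^{\infty}(\nu_{n_0}) \ge 2$, so $\ker \ell$ has dimension at least one, and because $\nu_{n_0} \notin \ker \ell$ every nonzero element of $\ker \ell$ is non-proportional to $\nu_{n_0}$; picking such a $\tau$ and taking $\epsilon$ sufficiently small yields $|\sigma_n| \le \nu_n$ for every $n$ and $\sigma \ne 0$, completing the decomposition. I expect this last step, combining the invertibility of $\Gamma$ on $\{k_{n_0} = 0\}$ with the dimension count for $L^{\infty}(\nu_{n_0})$, to be the main technical obstacle.
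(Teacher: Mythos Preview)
Your argument is correct. Parts (a), (ii)$\Leftrightarrow$(iii), and (iii)$\Rightarrow$(i) match the paper's reasoning (your (iii)$\Rightarrow$(i) is in fact more explicit than the paper's one-line ``it is already clear''). The genuine divergence is in not-(iii)$\Rightarrow$not-(i). The paper stays entirely inside positive measures: it splits $\nu_{n_0}=\mu_{n_0}+\rho_{n_0}$ into two nonzero mutually singular pieces, applies Theorem~\ref{T7.1} to each piece (together with the remaining $\nu_n$, $n\neq n_0$) to manufacture positive automorphic measures $\mu,\rho\in\mathcal M_G$ with $\Gamma(\mu)=\Gamma(\rho)=0$, and then Corollary~\ref{C7.1} forces $\mu+\rho=\nu$, yielding the convex decomposition directly after normalization. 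Your route instead builds a signed perturbation $\sigma$ via a linear-algebra/dimension count; this is valid but requires extending the measure framework and $\Gamma$ to signed objects, and the one-dimensionality of $\ker\Gamma|_V$ tacitly needs the extra step of translating a putative second kernel vector by $(t,\ldots,t)$ for large $t$ to reduce to the nonnegative case covered by Corollary~\ref{C7.1}. The paper's approach is shorter and makes Theorem~\ref{T7.1} the workhorse; yours avoids invoking Theorem~\ref{T7.1} in this step but pays for it with the signed-measure overhead and some notational conflation of $\tau\in L^\infty(\nu_{n_0})$ with the measure $\tau\,d\nu_{n_0}$.
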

\begin{proof}
Part (a) is obvious from the already mentioned fact that $\mathcal H_G$ is compact and was only stated here because the corresponding claims were also
made in Theorem \ref{T1.2}.

A Herglotz function $F$ with representation \eqref{hergl} satisfies $F(i)=a+i\nu(\R_{\infty})$. In particular, if $F$ is from the set of part (a),
then $a=0$, $\nu(\R_{\infty})=1$.

To prove part (b), observe that the linear structure on the set from part (a) is the same as the one on the associated measures $\nu\in\mathcal M_G$.
More explicitly, we have $F_{c\mu+(1-c)\nu}=cF_{\mu}+(1-c)F_{\nu}$. So we may as well try to find the extreme points of the space of measures
\begin{equation}
\label{8.1}
X\equiv\{\nu\in\mathcal M_G: \nu(\R_{\infty})=1, \Gamma (\nu)=0 \} .
\end{equation}

Since for given $x_n\in I_n$, the $w_n$ from part (iii) are uniquely determined if we want to obtain a measure from this space, it is already clear
that these measures are extreme points.

Conversely, if $\nu\in X$ is not of the type described in (iii), then for at least one $n$, let's say for $n=1$,
we can write $\nu_1=\mu_1+\rho_1$, with $\mu_1,\rho_1\not= 0$ and singular with respect to each other. Now Theorem \ref{T7.1}
lets us find constants $c_2,\ldots , c_N>0$ such that
\[
\Gamma (\mu_1, c_2\nu_2, c_3\nu_3,\ldots , c_N\nu_N)=0 .
\]
Let's denote the corresponding measure from $\mathcal M_G$ by $\mu$. Of course, we can do the same thing for $\rho_1$, and we obtain a second
measure $\rho\in\mathcal M_G$, satisfying $\Gamma (\rho)=0$,
and with restriction to $S_1$ equal to $\rho_1$, and the other restrictions, to $S_n$, are multiples of the $\nu_n$.
Now $\sigma=\mu+\rho$ satisfies $\Gamma (\sigma)=0$, $\sigma_1=\nu_1$, $\sigma_n=d_n\nu_n$ ($n\ge 2$). Hence $\sigma=\nu$ by Corollary \ref{C7.1}.
We have succeeded in writing
\[
\nu = \mu(\R_{\infty}) \frac{\mu}{\mu(\R_{\infty})} + \rho(\R_{\infty}) \frac{\rho}{\rho(\R_{\infty})}
\]
as a non-trivial convex combination. Notice here that $\mu(\R_{\infty}),\rho(\R_{\infty})>0$, and indeed
\[
\mu(\R_{\infty})+\rho(\R_{\infty}) = \nu(\R_{\infty})=1 ,
\]
as required. Obviously, the normalized versions of $\mu,\rho$ are in $X$. Furthermore, $\mu\not= \rho$ by construction.
We have shown that only the measures listed in (iii) are extreme points.

Finally, we already discussed earlier that exactly these measures give us the finite gap operators $H\in\mathcal D_0(U)$.
\end{proof}
\section{Representation of reflectionless operators by measures}
We already have a homeomorphism $\mathcal D(U)\cong \{ F\in\mathcal H_G: F(i)=i\}$ from Theorem \ref{T1.1}.
The correspondence between a Herglotz function and the data $(a,\nu)$ from its representation \eqref{hergl} is a homeomorphism also;
see, for example, \cite[Theorem 7.3]{Rembook}. So in our current situation, we have
a homeomorphism $\mathcal D(U)\cong X$, with $X$ still denoting the space of measures from \eqref{8.1}. 
In this version, the statement contains a condition involving the map $\Gamma$; we
can give it a more satisfying form with the help of Theorems \ref{T6.1}, \ref{T7.1}.
\begin{Theorem}
\label{T9.1}
The following map is a homeomorphism:
\begin{gather*}
\mathcal D(U) \to\mathcal M_1(S_1)\times\ldots\times\mathcal M_1(S_N) ,\\
H \mapsto \left( \frac{\nu_1}{\nu_1(S_1)} , \ldots, \frac{\nu_N}{\nu_N(S_N)}\right) .
\end{gather*}
It sends an $H$ to the normalized versions of the restrictions to $S_n$ of the automorphic measure associated with the $F$ function of $H$.
\end{Theorem}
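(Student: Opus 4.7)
The plan is to chain together the homeomorphisms from Theorems \ref{T1.1}, \ref{T6.1}, and \ref{T7.1} and argue that the resulting map lands in, and covers, the product of probability spaces bijectively. Concretely, Theorem \ref{T1.1} identifies $\mathcal D(U)$ with $\{F\in\mathcal H_G: F(i)=i\}$, and evaluating the Herglotz representation \eqref{hergl} at $z=i$ gives $F(i)=a+i\nu(\R_{\infty})$, so this set corresponds via $F\leftrightarrow\nu$ to
\[
X=\{\nu\in\mathcal M_G: \nu(\R_{\infty})=1,\ \Gamma(\nu)=0\} ,
\]
exactly as in \eqref{8.1}. Applying the restriction homeomorphism from Theorem \ref{T6.1} turns $X$ into the subset $Y\subseteq\mathcal M(S_1)\times\ldots\times\mathcal M(S_N)$ cut out by $\sum\nu_n(S_n)=1$ and $\Gamma(\nu_1,\ldots,\nu_N)=0$.

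Next I would define the candidate map $\Phi:Y\to\mathcal M_1(S_1)\times\ldots\times\mathcal M_1(S_N)$ by $(\nu_1,\ldots,\nu_N)\mapsto(\nu_1/\nu_1(S_1),\ldots,\nu_N/\nu_N(S_N))$. To see $\Phi$ is well defined I would invoke Lemma \ref{L7.1}: since $\nu\neq 0$ and $\Gamma(\nu)=0$, no restriction $\nu_n$ can vanish, so every denominator is strictly positive. Bijectivity of $\Phi$ comes from Theorem \ref{T7.1}: given any $(\mu_1,\ldots,\mu_N)$ with $\mu_n\in\mathcal M_1(S_n)$, the theorem supplies unique $c_n>0$ with $\sum c_n=1$ and $\Gamma(c_1\mu_1,\ldots,c_N\mu_N)=0$, and the tuple $(c_1\mu_1,\ldots,c_N\mu_N)$ is the unique preimage in $Y$ (uniqueness really uses Corollary \ref{C7.1}, which ensures that no other choice of scaling produces a point in $Y$ with the same normalized restrictions).

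For continuity of $\Phi$ itself, the restriction map is continuous by Theorem \ref{T6.1}, and each evaluation $\mu\mapsto\mu(S_n)$ is continuous on $\mathcal M(\mathcal F)$ (since $S_n$ is clopen in the disjoint-union circle topology on $\mathcal F$, so $\chi_{S_n}$ is continuous there); the positivity $\nu_n(S_n)>0$ noted above means we never divide by zero, so the normalization step is continuous. The main obstacle is continuity of $\Phi^{-1}$, which would require checking that the constants $c_n$ produced by Theorem \ref{T7.1} depend continuously on $(\mu_1,\ldots,\mu_N)$. I would bypass this directly by a compactness argument: the composition $\mathcal D(U)\to\mathcal M_1(S_1)\times\ldots\times\mathcal M_1(S_N)$ is a continuous bijection between compact Hausdorff spaces. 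Compactness of the domain follows because $\mathcal D(U)$ is identified with a closed subset $\{F\in\mathcal H_G: F(i)=i\}$ of the compact space $\mathcal H_G$, while compactness of the codomain is standard for probability measures on compact metric spaces in the weak $*$ topology. A continuous bijection between compact Hausdorff spaces is automatically a homeomorphism, so the inverse is continuous for free and the theorem follows.
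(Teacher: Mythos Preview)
Your approach is essentially identical to the paper's, and the overall architecture is sound. There is, however, one slip: the set $Y$ is not cut out by the condition $\sum_n \nu_n(S_n)=1$. The normalization $\nu(\R_\infty)=1$ refers to the total mass of the full automorphic measure on $\R_\infty$, not to the mass on the fundamental set $\mathcal F$; by \eqref{5.11} this translates to $\int_{\mathcal F} D(x)\,d\nu_0(x)=1$ rather than $\nu_0(\mathcal F)=1$. Consequently, the tuple $(c_1\mu_1,\ldots,c_N\mu_N)$ supplied by Theorem \ref{T7.1} (with $\sum c_n=1$) need not lie in the true $Y$. The fix is easy: given those $c_n$, rescale by the unique $\lambda>0$ that makes the automorphic extension a probability measure; the rescaled tuple does lie in $Y$ and still normalizes to $(\mu_1,\ldots,\mu_N)$. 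Injectivity is unaffected, since Corollary \ref{C7.1} forces any two preimages to differ by a common scalar, which the total-mass condition then pins down. With this correction your proof matches the paper's.
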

\begin{proof}
The preceding discussion has already established the variant version of this statement where we don't normalize the measures.
Next, notice that Theorem \ref{T7.1} lets us recover the numbers $\nu_n(S_n)>0$ from the normalized measures $\mu_n=\nu_n/\nu_n(S_n)$
since $\Gamma(\nu_1, \ldots ,\nu_N)=0$ and $\nu(\R_{\infty})=1$.
Lemma \ref{L7.1} guarantees that indeed $\nu_n(S_n)>0$.

Clearly the maps $\nu\mapsto \nu_n(S_n)$ are continuous, by Theorem \ref{T6.1}, so the map from Theorem \ref{T9.1} has now been recognized
as a continuous bijection. We are mapping between compact metric spaces, so the continuity of the inverse map is automatic.
\end{proof}

We can elaborate some more on this theme. Since we are mapping from a compact space, we in fact have
\[
m_n\equiv\min_{H\in\mathcal D(U)} \nu_n(S_n)>0 .
\]
Moreover, this map is linear with respect to the natural linear structure on $F$ functions satisfying $F(i)=i$, so, as already mentioned in the introduction,
Theorem \ref{T1.2} now shows that $m_n= \min_{H\in\mathcal D_0(U)} \nu_n(S_n)$. Or, if we use the more explicit description from Theorem \ref{T8.1}(b)(iii),
then we can say that $m_n=\min w_n$ where now the minimum is taken over the torus $(x_1,\ldots, x_N)\in S_1\times\ldots\times S_N$ and, given such points,
the $w_n>0$ denote the unique weights for which the automorphic measure $\nu$ with restrictions $(w_1\delta_{x_1},\ldots ,w_N\delta_{x_N})$ satisfies
$\Gamma(\nu)=0$, $\nu(\R_{\infty})=1$.

Of course, similar remarks apply to the maximum of $\nu_n(S_n)$.

Finally, while most of our results dealt with Dirac operators exclusively, we can easily go back to the more general setting of canonical systems, if this is desired,
for example by using the same simple device that was also employed in \cite{ForRem,RemZ} in similar situations. Please see these references for more on this theme;
we limit ourselves to a few quick and mostly obvious remarks here. As a preparation, introduce the notation
\[
\mathcal Z = \{ H(x)\equiv P_{\alpha}: 0\le\alpha <\pi \}
\]
for the trivial canonical systems with constant (extended) real $m$ functions. So the $F$ function of $H(x)\equiv P_{\alpha}$ is $F(\lambda)\equiv -\tan\alpha\in\R_{\infty}$.
Then $\mathcal R(U)\setminus\mathcal Z$ can be restored from $\mathcal D(U)$ by letting the translation/dilation group $w\mapsto cw+a$, $c>0$, $a\in\R$,
act on $m$ functions, or, what is the same here, on $F$ functions. So we similarly map $F$ to $cF+a$. I mention in passing that there is also an easy explicit description
of what this action does to the coefficient functions $H(x)$ \cite[Theorem 3.20]{Rembook}.

Obviously, we recover all $F$ functions except the trivial
ones $F\equiv b\in\R_{\infty}$ from above if we act in this way on the $F$ functions satisfying $F(i)=i$. Since the acting group is homeomorphic to $\C^+$,
this gives a natural identification $\mathcal R(U)\setminus\mathcal Z\cong \C^+\times\mathcal D(U)$, and allows us to make use of the maps constructed above,
for example in Theorem \ref{T9.1}, for general canonical systems also. In the same way, we obtain a natural correspondence
$\mathcal R_0(U)\setminus\mathcal Z\cong\C^+\times\mathcal D_0(U)$.

\end{document}